\renewcommand{\epsilon}{\varepsilon}
\newtheorem{theorem}{Theorem}[section]
\newtheorem{prop}[theorem]{Proposition}
\newtheorem{lemma}[theorem]{Lemma}
\newtheorem*{theoremA}{Theorem A}
\theoremstyle{definition}
\newtheorem{definition}[theorem]{Definition}
\theoremstyle{remark}
\newtheorem{remark}[theorem]{Remark}
\newcommand{\cll}{{\scriptstyle\bf L}}
\newcommand{\clh}{{\scriptstyle\bf H}}
\newcommand{\lhf}{\cll\clh\mathfrak F}
\newcommand{\XX}{\mathfrak X}
\newcommand{\pd}{\operatorname{proj. dim}}
\newcommand{\Z}{\mathbb Z}
\newcommand{\N}{\mathbb N}
\newcommand{\R}{\mathbb R}
\newcommand{\FP}{\operatorname{FP}}
\newcommand{\fpinfty}{{\FP}_{\infty}}
\newcommand{\flinfty}{\operatorname{FL}_{\infty}}
\newcommand{\EXT}{\operatorname{Ext}}
\newcommand{\cohom}[3]{H^{{\raise1pt\hbox{$\scriptstyle#1$}}}(#2\>\!,#3)}
\newcommand{\tatecohom}[3]{\widehat H^{{\raise1pt\hbox{$\scriptstyle#1$}}}(#2\>\!,#3)}
\newcommand{\Cohom}[3]%
  {H^{{\raise1pt\hbox{$\scriptstyle#1$}}}\big(#2\>\!,#3\big)}
\newcommand{\Tatecohom}[3]%
  {\widehat H^{{\raise1pt\hbox{$\scriptstyle#1$}}}\big(#2\>\!,#3\big)}
\newcommand{\homol}[3]{H_{{\lower1pt\hbox{$\scriptstyle#1$}}}(#2\>\!,#3)}
\newcommand{\homolog}[2]{H_{{\lower1pt\hbox{$\scriptstyle#1$}}}(#2)}
\newcommand{\colim}{\varinjlim}
\renewcommand{\ker}{\operatorname{Ker}}
\newcommand{\im}{\operatorname{Im}}
\newcommand{\coker}{\operatorname{Coker}}
\newcommand{\mono}{\rightarrowtail}
\newcommand{\epi}{\twoheadrightarrow}
\newcommand{\ra}{\rightarrow}
\newcommand{\Hom}{\operatorname{Hom}}
\newcommand{\blah}{{\phantom M}}
\newcommand{\id}{\operatorname{id}}
\newcommand{\exthat}{\widehat{\EXT}}
\newcommand{\hombar}{\underline{\Hom}}
\title{Finitary Group Cohomology and Eilenberg--Mac Lane spaces}
\author{Martin Hamilton}
\address{Hausdorff Center for Mathematics, Universität Bonn, Landwirtschaftskammer (Neubau), Endenicher Allee 60, 53115 Bonn}
\email{hamilton@math.uni-bonn.de}
\subjclass[2000]{20J06 55P20 18A22}
\keywords{cohomology of groups, finitary functors, Eilenberg--Mac
Lane spaces}
\begin{document}

\begin{abstract}
  We say that a group $G$ has \emph{cohomology almost everywhere
  finitary} if and only if the $n$th cohomology functors of $G$
  commute with filtered colimits for all sufficiently large $n$.

  In this paper, we show that if $G$ is a group in Kropholler's class $\lhf$ with
  cohomology almost everywhere finitary, then $G$ has an
  Eilenberg--Mac Lane space $K(G,1)$ which is dominated by a
  CW-complex with finitely many $n$-cells for all sufficiently
  large $n$. It is an open question as to whether this holds for
  arbitrary $G$.

  We also remark that the converse holds for any group $G$.
\end{abstract}

\maketitle

\section{Introduction}

Let $G$ be a group and $n\in\N$. The $n$th cohomology of $G$ is a
functor $H^n(G,-)$ from the category of $\Z G$-modules to the
category of abelian groups. We are interested in groups whose
$n$th cohomology functors are \emph{finitary}; that is, they
commute with all filtered colimit systems of coefficient modules.

We are concerned with the class $\lhf$ of locally hierarchically
decomposable groups (see \cite{fp} for a definition of this
class). If $G$ is an $\lhf$-group, then Theorem 2.1 in
\cite{Continuity:cohomologyfunctors} shows that the set
$$\{n\in\N:H^n(G,-)\mbox{ is finitary}\}$$ is either cofinite or finite. If this set is cofinite,
we say that $G$ has \emph{cohomology almost everywhere finitary},
and if this set is finite, we say that $G$ has \emph{cohomology
almost everywhere infinitary}.

In \cite{Finitarycohom} we investigated algebraic
characterisations of certain classes of $\lhf$-groups with
cohomology almost everywhere finitary. In this paper we prove the
following topological characterisation:

\begin{theoremA}
  Let $G$ be a group in the class $\lhf$. Then the following are
  equivalent: \begin{enumerate}
    \item $G$ has cohomology almost everywhere finitary;

    \item $G\times\Z$ has an Eilenberg--Mac Lane space $K(G\times\Z,1)$  with
    finitely many $n$-cells for all sufficiently large $n$;

    \item $G$ has an Eilenberg--Mac Lane space $K(G,1)$ which is dominated
    by a CW-complex with finitely many $n$-cells for all
    sufficiently large $n$.
  \end{enumerate}
\end{theoremA}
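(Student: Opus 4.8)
The plan is to prove the cycle of implications $(1)\Rightarrow(2)\Rightarrow(3)\Rightarrow(1)$, isolating the genuine content in $(1)\Rightarrow(2)$ and disposing of the other two by elementary retraction arguments that need no hypothesis on $G$ (this is the ``converse for arbitrary $G$'' promised in the abstract). For $(3)\Rightarrow(1)$, a domination $K(G,1)\to Y\to K(G,1)$ exhibits the cellular chain complex $C_\bullet(\widetilde{K(G,1)})$ as a $\Z G$-chain-homotopy retract of $C_\bullet(\widetilde Y)$, naturally in the coefficient module. Since $Y$ has finitely many $n$-cells for $n\gg 0$, each $C_n(\widetilde Y)$ is finitely generated free in that range, so $N\mapsto H^n\big(\Hom_{\Z G}(C_\bullet(\widetilde Y),N)\big)$ is finitary for large $n$; as $H^n(G,-)$ is a natural retract of this functor, it too is finitary. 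For $(2)\Rightarrow(3)$, one has $K(G\times\Z,1)\simeq K(G,1)\times S^1$, and $K(G,1)$ is a homotopy retract of $K(G,1)\times S^1$ via the inclusion of a fibre and the projection; taking $Y$ to be the model from (2) with finitely many $n$-cells for $n\gg 0$ immediately displays $K(G,1)$ as dominated by $Y$, which is exactly (3).

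The substance is $(1)\Rightarrow(2)$, and this is the only place where membership in $\lhf$ is used. First I would convert the cohomological hypothesis into an algebraic finiteness statement: using the analysis of \cite{Finitarycohom} together with Kropholler's hierarchical machinery for $\lhf$-groups (see \cite{fp}), I would show that (1) forces the trivial module $\Z$ to admit a projective $\Z G$-resolution $P_\bullet$ whose terms $P_n$ are finitely generated for all sufficiently large $n$. The reverse implication is the routine observation that such a resolution computes $H^n(G,-)$ by $\Hom_{\Z G}(P_\bullet,-)$ using only finitely generated projectives in high degrees, each yielding a finitary $\Hom$-functor. I would then tensor $P_\bullet$ with the standard length-one free resolution of the trivial $\Z[\Z]$-module to obtain a projective $\Z(G\times\Z)$-resolution of $\Z$ that is again finitely generated in all high degrees.

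The decisive step is the geometric realization: converting ``finitely generated projective resolution in high degrees'' into a genuine $K(G\times\Z,1)$ with finitely many $n$-cells for $n\gg 0$. Here the low skeleton may be infinite, so no finite-presentability of $G$ is required and the work is entirely in high dimensions, where cells correspond to \emph{free} generators and one must therefore trade the projective modules for free ones geometrically. I would appeal to Wall's realization technique, building $K(G\times\Z,1)$ by attaching cells to match $P_\bullet$ in high degrees, the attaching data being available because the target is aspherical, and then control the resulting finiteness obstruction in $\widetilde K_0(\Z(G\times\Z))$. Crossing with $\Z$ is what removes this obstruction: by the Bass--Heller--Swan description of the $K$-theory of the Laurent extension $\Z G[t,t^{-1}]$, together with the multiplicativity of the finiteness obstruction and the vanishing $\chi(S^1)=0$, the obstruction attached to the product $K(G,1)\times S^1$ dies. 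I expect this obstruction-killing, and the bookkeeping needed to perform the infinite tower of high-dimensional cell-trades while keeping each high skeleton finite, to be the main obstacle. For $G$ itself the obstruction need not vanish, which is precisely why (2) is stated for $G\times\Z$ and (3) asserts only domination.
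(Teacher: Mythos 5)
Your treatments of (ii) $\Rightarrow$ (iii) and (iii) $\Rightarrow$ (i) are essentially the paper's (one gloss: in (iii) $\Rightarrow$ (i) the dominating complex need not have fundamental group $G$, so its universal cover's chain complex is a priori a $\Z\pi_1$-complex rather than a $\Z G$-complex; the paper spends Lemma \ref{replace with one with same fun gp lemma} coning off a bouquet of circles to repair exactly this before the retract argument can be run over $\Z G$). The genuine gap is Step 1 of your (i) $\Rightarrow$ (ii): the claim that (i) forces $\Z$ to admit a projective $\Z G$-resolution that is finitely generated in all sufficiently high degrees. You give no argument for this beyond a vague appeal to \cite{Finitarycohom} and \cite{fp}, and no such statement is proved there or in the paper. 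In fact it is \emph{strictly stronger than Theorem A itself}: if it held, the paper's own Propositions \ref{final 2 mods even fg prop}, \ref{proj res eventualy fg implies free res eventually fg prop} and \ref{Kropholler prop} could be run over $\Z G$ directly (splicing onto a presentation complex of $G$ via Schanuel's Lemma), producing a $K(G,1)$ with finitely many $n$-cells for all large $n$ --- a conclusion the paper conspicuously never asserts, and the reason statement (ii) is formulated for $G\times\Z$ rather than $G$. What the hypothesis (i) actually yields --- and what the entire complete-cohomology apparatus (completely finitary and completely flat modules, the $M_\infty$ construction, poly-basic modules, Propositions \ref{reduction to fin subgp prop}--\ref{M divides module with even fg proj res prop}, Lemma \ref{exists cofib cfin module}) exists to prove --- is weaker: a sufficiently high syzygy $M$ of $\Z$ is completely finitary and cofibrant, and is a \emph{direct summand} of a module $L$ with an eventually finitely generated projective resolution. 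There is no reason a direct summand inherits such a resolution; that is precisely the Wall-type phenomenon, and it is the crux your proposal assumes away.

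Relatedly, your proposed mechanism for exploiting the factor $\Z$ --- Wall realization plus killing a finiteness obstruction in $\widetilde K_0(\Z[G\times\Z])$ via Bass--Heller--Swan --- is the right moral heuristic (it is Mather's trick) but is aimed at the wrong problem and is not what the paper does. Once an eventually finitely generated \emph{projective} resolution exists, no $\widetilde K_0$-obstruction arises at all: low dimensions are allowed infinitely many cells, and high-dimensional projectives are traded for frees by the elementary Eilenberg trick together with $\fpinfty\Rightarrow\flinfty$ (Proposition \ref{proj res eventualy fg implies free res eventually fg prop}); the geometric realization is then the cell-by-cell embedding of Proposition \ref{Kropholler prop}, with no obstruction theory. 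The actual role of crossing with $\Z$ is algebraic and elementary: tensoring $0\to M\to F_{m-1}\to\cdots\to F_0\to\Z\to 0$ with the length-one resolution of $\Z$ over $\Z\Z$ places $M$ in two \emph{adjacent} degrees of the resulting $\Z[G\times\Z]$-sequence, where it can be replaced by $L\cong M\oplus M'$ in both spots without destroying exactness; only then do Propositions \ref{final 2 mods even fg prop} and \ref{proj res eventualy fg implies free res eventually fg prop} apply. So your proposal needs the paper's complete-cohomology machinery (this is also the only place the $\lhf$ hypothesis and the finitary hypothesis are consumed), and its replacement for that machinery --- Step 1 plus obstruction theory --- is unsupported at its foundation.
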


The implications (ii) $\Rightarrow$ (iii) and (iii) $\Rightarrow$
(i) hold for any group $G$, while our proof of (i) $\Rightarrow$
(ii) requires the assumption that $G$ belongs to the class $\lhf$.
We do not know whether (i) $\Rightarrow$ (ii) holds for arbitrary
$G$.

\subsection{Acknowledgements}

I would like to thank my research supervisor Peter Kropholler for
suggesting that a result like Theorem A should be true, and for
his advice and support throughout this project. I would also like
to thank Philipp Reinhard for explaining the arguments in Lemma
\ref{replace with one with same fun gp lemma}

\section{Proof}

\subsection{Proof of Theorem A (i) $\Rightarrow$ (ii)}$ $

Suppose that $G$ is an $\lhf$-group with cohomology almost
everywhere finitary. We need to make use of \emph{complete
cohomology}, and refer the reader to
\cite{strgpgrdrings,completeres,Mislin} for further information on
definitions etc. If $R$ is a ring, then we can consider the
\emph{stable category} of $R$-modules; the objects are the
$R$-modules and the stable maps $M\ra N$ between $R$-modules are
the elements of the complete cohomology group $\exthat^0_R(M,N)$.

We make the following definitions:

\begin{definition}
  Let $R$ be a ring. An $R$-module $M$ is said to be
  \emph{completely finitary} (over $R$) if and only if the functor
  $$\exthat^n_R(M,-)$$ is finitary for all integers $n$.
\end{definition}

\begin{remark}
  We see from $4.1$(ii) in \cite{fp} that every $R$-module of type
  $\fpinfty$ is completely finitary.
\end{remark}

\begin{definition}
  Let $R$ be a ring. An $R$-module $N$ is said to be
  \emph{completely flat} (over $R$) if and only if $$\exthat^0_R(M,N)=0$$ for
  all completely finitary $R$-modules $M$.
\end{definition}

We have a version of the Eckmann--Shapiro Lemma for complete
cohomology (Lemma $1.3$ in \cite{polyelementary}):

\begin{lemma}\label{Eckmann--Shapiro Lemma}
  Let $H$ be a subgroup of $G$, $V$ be a $\Z H$-module and
  $N$ be a $\Z G$-module. Then, for all integers $n$, there is a natural
  isomorphism $$\exthat^n_{\Z G}(V\otimes_{\Z H}\Z G,N)\cong\exthat^n_{\Z
  H}(V,N).$$
\end{lemma}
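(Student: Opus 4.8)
The plan is to reduce the statement to the classical Eckmann--Shapiro lemma for ordinary cohomology by exploiting the description of complete cohomology as a colimit of ordinary $\EXT$-groups over syzygies. Recall from \cite{Mislin} that for any $\Z G$-module $M$ there is a natural isomorphism
\begin{equation*}
  \exthat^n_{\Z G}(M,N)\cong\colim_k\EXT^{n+k}_{\Z G}(\Omega^k M,N),
\end{equation*}
where $\Omega^k M$ denotes the $k$th syzygy of $M$ and the maps in the directed system are the connecting homomorphisms arising from the short exact sequences $0\to\Omega^{k+1}M\to P_k\to\Omega^k M\to 0$ associated with a fixed projective resolution $P_\bullet\epi M$. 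The same formula holds over $\Z H$. Thus it suffices to produce a compatible family of isomorphisms at each level $k$ and then pass to the colimit.

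The key observation is that induction behaves well with respect to syzygies. Since $\Z G$ is free as a $\Z H$-module, the functor $-\otimes_{\Z H}\Z G$ is exact and carries free (hence projective) $\Z H$-modules to free (hence projective) $\Z G$-modules. Consequently, if $P_\bullet\epi V$ is a projective resolution over $\Z H$, then $P_\bullet\otimes_{\Z H}\Z G\epi V\otimes_{\Z H}\Z G$ is a projective resolution over $\Z G$, and therefore
\begin{equation*}
  \Omega^k\bigl(V\otimes_{\Z H}\Z G\bigr)\cong\bigl(\Omega^k V\bigr)\otimes_{\Z H}\Z G,
\end{equation*}
the isomorphism being canonical up to projective summands, which do not affect complete cohomology. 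Applying the ordinary Eckmann--Shapiro lemma to each term of the directed system then gives
\begin{equation*}
  \EXT^{n+k}_{\Z G}\bigl((\Omega^k V)\otimes_{\Z H}\Z G,N\bigr)\cong\EXT^{n+k}_{\Z H}\bigl(\Omega^k V,N\bigr).
\end{equation*}

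Taking the colimit over $k$ on both sides recovers $\exthat^n_{\Z G}(V\otimes_{\Z H}\Z G,N)$ on the left and $\exthat^n_{\Z H}(V,N)$ on the right, which yields the desired isomorphism. The point requiring the most care, and which I expect to be the main obstacle, is verifying that the level-$k$ isomorphisms assemble into a morphism of directed systems, i.e. that they commute with the connecting maps defining the two colimits. This reduces to the naturality of the ordinary Eckmann--Shapiro isomorphism with respect to the short exact sequences $0\to\Omega^{k+1}V\to P_k\to\Omega^k V\to 0$ and their images under the exact functor $-\otimes_{\Z H}\Z G$; since induction sends the $\Z H$-sequence to the corresponding $\Z G$-sequence and the classical isomorphism is natural and compatible with connecting homomorphisms of long exact sequences, the relevant squares commute. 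Naturality in $N$ is inherited directly from the classical lemma.
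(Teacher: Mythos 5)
The paper does not actually prove this lemma at all: it simply quotes it as Lemma 1.3 of \cite{polyelementary}, so your attempt has to be judged against the standard arguments rather than against a proof in the text. Your overall strategy (induction $-\otimes_{\Z H}\Z G$ is exact and preserves projectives, hence commutes with forming syzygies, so a stable Eckmann--Shapiro isomorphism should follow by applying the classical one levelwise in a colimit description of $\exthat^*$) is the right idea. The fatal problem is the colimit formula you start from. The connecting homomorphism attached to $0\to\Omega^{k+1}M\to P_k\to\Omega^kM\to 0$ in the \emph{first} variable goes $\EXT^{m}_{\Z G}(\Omega^{k+1}M,N)\to\EXT^{m+1}_{\Z G}(\Omega^{k}M,N)$: dimension shifting in the contravariant variable \emph{lowers} the syzygy index as it raises the degree. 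So these maps do not run from your $k$-th term $\EXT^{n+k}_{\Z G}(\Omega^kM,N)$ to your $(k+1)$-st term $\EXT^{n+k+1}_{\Z G}(\Omega^{k+1}M,N)$, and the directed system you describe does not exist. Worse, no natural maps of that shape can exist: since the connecting maps are isomorphisms in positive degrees, $\EXT^{n+k}_{\Z G}(\Omega^kM,N)\cong\EXT^{n+2k}_{\Z G}(M,N)$, so your system would amount to a family of natural ``periodicity'' operators $\EXT^{m}_{\Z G}(M,N)\to\EXT^{m+2}_{\Z G}(M,N)$; these exist for cyclic groups (which is why the formula looks plausible on first examples) but not in general, e.g.\ for $G=\Z/2\times\Z/2$ the dimensions of $H^m(G,\mathbb{F}_2)$ grow with $m$.

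The known colimit descriptions of complete cohomology necessarily dimension-shift in the \emph{second} (covariant) variable, where the connecting maps do go the right way, $\delta:\EXT^{m}_{\Z G}(M,N)\to\EXT^{m+1}_{\Z G}(M,\Omega N)$: Mislin's satellite definition \cite{Mislin} gives $\exthat^n_{\Z G}(M,N)=\colim_k S^{-k}\EXT^{n+k}_{\Z G}(M,N)$, where $S^{-k}\EXT^{n+k}_{\Z G}(M,N)$ sits inside $\EXT^{n+k}_{\Z G}(M,\Omega^kN)$, while the Benson--Carlson style description used by Kropholler is $\exthat^n_{\Z G}(M,N)=\colim_k\hombar_{\Z G}(\Omega^{n+k}M,\Omega^kN)$. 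With either correct formula your reduction is salvageable, but it then requires ingredients your outline never addresses, precisely because your formula suppresses the second variable: one must compare $\Z G$-syzygies of $N$ with $\Z H$-syzygies of its restriction (this is where freeness of $\Z G$ over $\Z H$ enters a second time, to see that restriction preserves projectives), and one must check that the adjunction isomorphism $\Hom_{\Z G}(V\otimes_{\Z H}\Z G,-)\cong\Hom_{\Z H}(V,-)$ descends to $\hombar$, i.e.\ that maps factoring through projectives correspond under the adjunction in both directions. As written, the proof rests on a description of $\exthat^*$ that is not a theorem, so it has a genuine gap at its foundational step.
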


Now, let $G$ be an $\lhf$-group, and $N$ be a $\Z G$-module. To
check whether $N$ is completely flat, it is enough to check
whether the restriction of $N$ to every finite subgroup of $G$ is
completely flat, by the following proposition. This is where the
assumption that $G$ belongs to $\lhf$ is used.

\begin{prop}\label{reduction to fin subgp prop}
  Let $G$ be an $\lhf$-group, and $N$ be a $\Z
  G$-module. Then the following are equivalent: \begin{enumerate}
    \item $N$ is completely flat as a $\Z G$-module;

    \item $N$ is completely flat as a $\Z K$-module for all finite
    subgroups $K$ of $G$.
  \end{enumerate}
\end{prop}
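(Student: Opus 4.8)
The plan is to prove the two implications separately, using the Eckmann--Shapiro Lemma \ref{Eckmann--Shapiro Lemma} as the bridge between $\Z G$ and its finite subgroups and reserving the hypothesis $G\in\lhf$ for the harder direction. For (i) $\Rightarrow$ (ii) I would first observe that induction from a finite subgroup sends completely finitary modules to completely finitary modules. Let $K\leq G$ be finite and let $V$ be a completely finitary $\Z K$-module. For any filtered system $(A_i)$ of $\Z G$-modules, combining Lemma \ref{Eckmann--Shapiro Lemma} with the facts that $\res{G}{K}$ commutes with filtered colimits and that $V$ is completely finitary gives
$$\exthat^n_{\Z G}(V\otimes_{\Z K}\Z G,\colim_i A_i)\cong\colim_i\exthat^n_{\Z K}(V,\res{G}{K}A_i)\cong\colim_i\exthat^n_{\Z G}(V\otimes_{\Z K}\Z G,A_i),$$
so $V\otimes_{\Z K}\Z G$ is completely finitary over $\Z G$. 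If $N$ is completely flat over $\Z G$, then $\exthat^0_{\Z G}(V\otimes_{\Z K}\Z G,N)=0$, and Eckmann--Shapiro identifies this with $\exthat^0_{\Z K}(V,N)$. As $V$ was an arbitrary completely finitary $\Z K$-module, $N$ is completely flat over $\Z K$, which proves (ii). This argument uses no hypothesis on $G$.

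For (ii) $\Rightarrow$ (i) I would assume that $N$ restricts to a completely flat module over every finite subgroup and aim to show $\exthat^0_{\Z G}(M,N)=0$ for every completely finitary $\Z G$-module $M$. The engine is Kropholler's hierarchy machinery (cf.\ \cite{fp}): over an $\hf$-group, a completely finitary module lies in the smallest class of $\Z G$-modules that contains those induced from finite subgroups and is closed under the relevant operations of the hierarchy, so that $M$ admits a resolution of \emph{finite} length whose terms are direct summands of modules of the form $W\otimes_{\Z F}\Z G$ with $F\leq G$ finite and $W$ a finitely generated $\Z F$-module. The passage from $\lhf$ to $\hf$ is absorbed into this step by the standard local reduction, the finiteness carried by a completely finitary module being detected on finitely generated, hence $\hf$, subgroups.

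The finite-subgroup input then disposes of each term of the resolution. For $F$ finite, Eckmann--Shapiro gives $\exthat^j_{\Z G}(W\otimes_{\Z F}\Z G,N)\cong\exthat^j_{\Z F}(W,N)$ for all $j$. Over the finite group $F$, complete cohomology is Tate cohomology, so dimension shifting in both directions identifies $\exthat^j_{\Z F}(W,N)$ with $\exthat^0_{\Z F}(\Omega^j W,N)$; since $\Z F$ is Noetherian the syzygies and cosyzygies $\Omega^j W$ are again finitely generated, hence completely finitary, and the hypothesis that $N$ is completely flat over $\Z F$ forces $\exthat^j_{\Z F}(W,N)=0$ for every $j\in\Z$. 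Direct summands and finite direct sums inherit this vanishing, so $\exthat^\ast_{\Z G}(-,N)$ annihilates every term of the resolution in all degrees. Splicing the finite resolution into short exact sequences and running the long exact sequences of $\exthat^\ast_{\Z G}(-,N)$ then yields $\exthat^0_{\Z G}(M,N)=0$, as required.

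The main obstacle is the structural input itself. A naive transfinite induction over the hierarchy, comparing $\exthat^\ast_{\Z G}(M,N)$ with $\exthat^\ast_{\Z G_\sigma}(\res{G}{G_\sigma}M,N)$ through the isotropy spectral sequence of the finite-dimensional contractible $G$-CW-complex furnished at each stage, founders because the restriction of a completely finitary module to an infinite hierarchy stabiliser $G_\sigma$ need not remain completely finitary; one therefore cannot simply feed the inductive hypothesis the restricted coefficients. Kropholler's method circumvents precisely this difficulty by organising the induction around modules built by induction \emph{from finite subgroups}, where Eckmann--Shapiro together with the Noetherian behaviour of $\Z F$ keeps the finiteness condition under control. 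Establishing that every completely finitary $\Z G$-module falls into this class, and that the $\lhf$-to-$\hf$ passage is legitimate for the completely finitary condition, is the crux on which the whole implication rests.
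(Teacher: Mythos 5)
Your first implication is fine: it is the paper's own argument (the paper simply says that (i) $\Rightarrow$ (ii) follows from Lemma \ref{Eckmann--Shapiro Lemma}), and the details you supply --- induction from a finite subgroup preserves complete finitariness, so complete flatness over $\Z G$ forces $\exthat^0_{\Z K}(V,N)=0$ via Eckmann--Shapiro --- are exactly the right ones.

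The implication (ii) $\Rightarrow$ (i), however, has a genuine gap, and it is the one you yourself flag in your final paragraph. Your entire argument rests on the claim that over an $\hf$-group every completely finitary module admits a \emph{finite-length} resolution whose terms are direct summands of modules $W\otimes_{\Z F}\Z G$ with $F$ finite and $W$ finitely generated. That is not a result of \cite{fp}, and you do not prove it. Within this paper, statements of precisely this shape (Propositions \ref{divides p plus k prop} and \ref{M divides module with even fg proj res prop}) are established only \emph{later}, only for \emph{cofibrant} completely finitary modules, and their proofs pass through Lemma \ref{completely flat lemma}, which itself invokes the very Proposition you are proving --- so that route is circular in the paper's logical order. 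The claim is also doubtful as stated: even for $M=\Z$ with $G\in\hf$ of type $\fpinfty$, what the hierarchy machinery produces is a finite-dimensional contractible $G$-complex with finite stabilizers, hence a finite-length resolution by possibly \emph{infinite} direct sums of modules induced from finite subgroups; the finite generation of the terms that your argument needs is a cocompactness-type finiteness which is simply not available. Finally, your proposed $\lhf$-to-$\hf$ reduction (``detected on finitely generated subgroups'') founders on exactly the obstruction you concede for hierarchy stabilizers: restriction to a subgroup need not preserve complete finitariness, so nothing guarantees that $M$ restricted to a finitely generated subgroup is completely finitary there.

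The paper sidesteps all of this by running the transfinite induction on the \emph{coefficient} module rather than on the test module: generalizing Proposition 6.8 of \cite{polyelementary}, one shows that if $N$ is completely flat over $\Z K$ for every finite $K\leq G$, then $N\otimes_{\Z H}\Z G$ is completely flat over $\Z G$ for every $\lhf$-subgroup $H$ of $G$, and then takes $H=G$. The induction works because complete flatness is a condition tested against completely finitary modules, so the class of completely flat $\Z G$-modules is closed under filtered colimits, direct sums, and finite-length resolutions (long exact sequences); the cellular chain complex of the finite-dimensional complex attached to a hierarchy stage resolves $N\otimes_{\Z H}\Z G$ by direct sums of modules $N\otimes_{\Z H_\sigma}\Z G$ induced from earlier stages, and the local step is the identification of $N$ with $\colim_\lambda\,(N\otimes_{\Z G_\lambda}\Z G)$ as $G_\lambda$ runs over the finitely generated subgroups of $G$. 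The fixed test module $M$ is never decomposed or restricted, which is precisely how the difficulty you identified is avoided; your plan requires solving that difficulty head-on, and that is where it stalls.
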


\begin{proof}$ $

\begin{itemize}
  \item (i) $\Rightarrow$ (ii): Follows from Lemma \ref{Eckmann--Shapiro Lemma}.

  \item (ii) $\Rightarrow$ (i): An easy generalization of Proposition
  $6.8$ in \cite{polyelementary} shows that if $N$ is a $\Z
  G$-module which is completely flat as a $\Z K$-module for all finite
  subgroups $K$ of $G$, then $N\otimes_{\Z H}\Z G$ is completely
  flat as a $\Z G$-module for all $\lhf$-subgroups $H$ of $G$. Then, as
  we are assuming that $G$ belongs to $\lhf$, the result follows.
\end{itemize}\end{proof}

Write $B:=B(G,\Z)$ for the $\Z G$-module of bounded functions from
$G$ to $\Z$. Following Benson \cite{complexity1,complexity2}, a
$\Z G$-module $M$ is said to be \emph{cofibrant} if $M\otimes B$
is projective. We now make the following definition:

\begin{definition}
  Let $G$ be a group. A $\Z G$-module is called \emph{basic}
  if it is of the form $U\otimes_{\Z K}\Z G$, where $K$ is a
  finite subgroup of $G$ and $U$ is a
  completely finitary, cofibrant $\Z K$-module.

  A $\Z G$-module $M$ is called \emph{poly-basic} if it has a
  series $$0=M_0\leq\cdots\leq M_n=M$$ in which the sections
  $M_i/M_{i-1}$ are basic.
\end{definition}

The first step in the proof of Theorem A involves the following
construction, which is a variation on the construction found in \S
$4$ of \cite{polyelementary}:

\begin{definition}\label{Minfty}
  Let $G$ be a group, and $M$ be a $\Z G$-module. We
  construct a chain $$M=M_0\subseteq M_1\subseteq
  M_2\subseteq\cdots$$ inductively so that for each $n\geq 0$
  there is a short exact sequence $$C_n\mono M_n\oplus P_n\epi
  M_{n+1}$$ in which \begin{enumerate}
    \item $C_n$ is a direct sum of basic modules;

    \item $P_n$ is projective; and

    \item every map from a basic module to $M_n$ factors
    through $C_n$.
  \end{enumerate}

  Set $M_0=M$. Suppose that $n\geq 0$ and that $M_n$ has been
  constructed. Consider the pointed category whose objects are
  ordered pairs $(C,\phi)$, where $C$ is a basic module and
  $\phi$ is a homomorphism from $C$ to $M_n$, and whose morphisms
  are the obvious commutative triangles. Choose a set $\XX_n$
  containing at least one object of this category from each
  isomorphism class. Set $C_n:=\bigoplus_{(C,\phi)\in\XX_n} C$ and
  use the maps $\phi$ associated to each object to define a map
  $C_n\ra M_n$. Properties (i) and (iii) are now guaranteed.
  
  Note that any basic module $U\otimes_{\Z K}\Z G$ can be written as a direct sum of copies of $U$. Then, as tensor products commute with direct sums, we see that any basic module is itself cofibrant. Hence $C_n$ is cofibrant, so $C_n\otimes B$ is projective and we can set
  $P_n:=C_n\otimes B$. Finally, $M_{n+1}$ can be defined as the
  cokernel of this inclusion $C_n\ra M_n\oplus P_n$, or in other
  words the pushout, and since the map $C_n\ra P_n$ is an
  inclusion, it follows that the induced map $M_n\ra M_{n+1}$ is
  also injective and we regard $M_n$ as a submodule of $M_{n+1}$.
  Finally, set $M_{\infty}$ to be the colimit
  $$M_{\infty}:=\colim_n
  M_n.$$

\end{definition}

Next, we have the following technical proposition, which shall be
needed in the proof of Proposition \ref{divides p plus k prop}:

\begin{prop}\label{colimit prop}
  Let $G$ be a group, and $M$ be a $\Z
  G$-module. Construct the chain $$M=M_0\subseteq M_1\subseteq
  M_2\subseteq\cdots$$ as in Definition \emph{\ref{Minfty}}. Then for
  each $n$, we can express $M_{n+1}$ as a filtered colimit $$M_{n+1}:=\colim_{\lambda_n} \frac{M_n\oplus
  P_{\lambda_n}}{C_{\lambda_n}}$$ where $P_{\lambda_n}$ is
  projective and $C_{\lambda_n}$ is poly-basic.
\end{prop}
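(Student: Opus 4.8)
The plan is to realise the claimed colimit by decomposing $C_n$ into its basic summands and taking finite partial sums. Recall from Definition \ref{Minfty} that $C_n=\bigoplus_{(C,\phi)\in\XX_n}C$ is a direct sum of basic modules, that $P_n=C_n\otimes B$, and that $M_{n+1}$ is the cokernel of the map $d_n\colon C_n\to M_n\oplus P_n$ whose first component is assembled from the structure maps $\phi$ and whose second component is the canonical inclusion $C_n\hookrightarrow C_n\otimes B$ induced by the inclusion $\Z\hookrightarrow B$ of the constant functions.

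First I would fix the index. Let $\Lambda_n$ be the set of finite subsets $\lambda$ of $\XX_n$, directed by inclusion. For each $\lambda\in\Lambda_n$ set $C_{\lambda}:=\bigoplus_{(C,\phi)\in\lambda}C$ and $P_{\lambda}:=C_{\lambda}\otimes B$, let $d_{\lambda}\colon C_{\lambda}\to M_n\oplus P_{\lambda}$ be the restriction of $d_n$ (which indeed lands in $M_n\oplus P_{\lambda}$, since $d_n$ sends $C_{\lambda}$ into $M_n\oplus(C_{\lambda}\otimes B)$), and define $\frac{M_n\oplus P_{\lambda}}{C_{\lambda}}:=\coker(d_{\lambda})$. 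I would then verify the two structural claims. A finite direct sum $A_1\oplus\cdots\oplus A_k$ of basic modules carries the series $0\le A_1\le A_1\oplus A_2\le\cdots\le A_1\oplus\cdots\oplus A_k$ with basic sections, so each $C_{\lambda}$ is poly-basic. And since each basic summand is cofibrant (as observed in Definition \ref{Minfty}), $P_{\lambda}=\bigoplus_{(C,\phi)\in\lambda}(C\otimes B)$ is a finite direct sum of projectives, hence projective.

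Next I would check that these quotients form a filtered colimit system over $\Lambda_n$. For $\lambda\subseteq\lambda'$ the inclusions $C_{\lambda}\hookrightarrow C_{\lambda'}$ and $P_{\lambda}\hookrightarrow P_{\lambda'}$ together with $\id_{M_n}$ give a square commuting over $d_{\lambda}$ and $d_{\lambda'}$, and hence an induced map on cokernels; these transition maps are compatible on triple inclusions, so we obtain a functor from $\Lambda_n$ to $\Z G$-modules.

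Finally I would identify the colimit with $M_{n+1}$. Since $\XX_n$ is the union of its finite subsets, $C_n=\colim_{\lambda}C_{\lambda}$ and $P_n=\colim_{\lambda}P_{\lambda}$; as $\Lambda_n$ is filtered the constant diagram at $M_n$ has colimit $M_n$, and because finite direct sums are computed levelwise we get $\colim_{\lambda}(M_n\oplus P_{\lambda})=M_n\oplus P_n$ and $\colim_{\lambda}d_{\lambda}=d_n$. The main point, and the step where the real work lies, is the exactness of filtered colimits over a module category: applying $\colim_{\lambda}$ to the right-exact sequences $C_{\lambda}\to M_n\oplus P_{\lambda}\to\coker(d_{\lambda})\to 0$ yields the right-exact sequence $C_n\to M_n\oplus P_n\to\colim_{\lambda}\coker(d_{\lambda})\to 0$, whence $\colim_{\lambda}\coker(d_{\lambda})\cong\coker(d_n)=M_{n+1}$. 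This delivers the required expression $M_{n+1}=\colim_{\lambda_n}\frac{M_n\oplus P_{\lambda_n}}{C_{\lambda_n}}$ with $P_{\lambda_n}$ projective and $C_{\lambda_n}$ poly-basic.
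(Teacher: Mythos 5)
Your proposal is correct and follows essentially the same route as the paper's own (much terser) proof: the paper likewise indexes over the finite subsets of $\XX_n$, sets $C_{\lambda_n}:=\bigoplus_{(C,\phi)\in\XX_{\lambda_n}}C$ and $P_{\lambda_n}:=C_{\lambda_n}\otimes B$, and leaves the remaining verifications implicit. You have simply filled in the details the paper omits (poly-basicity of finite sums of basics, projectivity of $P_{\lambda_n}$, and the commutation of filtered colimits with cokernels), all of which are accurate.
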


\begin{proof}
  Let $\XX_n$ be the set defined in Definition
  \ref{Minfty}. We can write $\XX_n$ as the filtered colimit of its
  finite subsets $$\XX_n:=\colim_{\lambda_n} \XX_{\lambda_n}.$$
  Set $$C_{\lambda_n}:=\bigoplus_{(C,\phi)\in\XX_{\lambda_n}} C,$$
  and $$P_{\lambda_n}:=C_{\lambda_n}\otimes B.$$ The result now follows.\end{proof}

The next step in the proof is to show that the module $M_{\infty}$
is completely flat. Recall (see, for example, \S $3$ of
\cite{complexity1}) that if $M$ and $N$ are $\Z G$-modules, then
$\hombar_{\Z G}(M,N)$ is the quotient of $\Hom_{\Z G}(M,N)$ by the
additive subgroup consisting of homomorphisms which factor through
a projective module. We have the following useful result (Lemma
$2.3$ in \cite{polyelementary}):

\begin{lemma}\label{hombar lemma}
  Let $M$ and $N$ be $\Z G$-modules. If $M$ is cofibrant, then the
  natural map $$\hombar_{\Z G}(M,N)\ra\exthat^0_{\Z G}(M,N)$$ is
  an isomorphism.
\end{lemma}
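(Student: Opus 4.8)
The plan is to compute $\exthat^0_{\Z G}(M,N)$ from a complete resolution of $M$, and to show that when $M$ is cofibrant such a resolution can be chosen to agree with an ordinary projective resolution in all non-negative degrees. Recall (from \cite{completeres,Mislin}) that if $\widehat P_*$ is a complete resolution of $M$ --- an acyclic complex of projectives together with a comparison map to a projective resolution $P_*\to M$ which is an isomorphism in high degrees --- then $\exthat^n_{\Z G}(M,N)=H^n(\Hom_{\Z G}(\widehat P_*,N))$. So first I would produce a complete resolution $\widehat P_*$ with $\widehat P_i=P_i$ for all $i\geq 0$; equivalently, I would splice a projective resolution of $M$ with a projective \emph{co}resolution $0\to M\to I^0\to I^1\to\cdots$.

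To build the coresolution I would use the module $B$. Writing $\epsilon\colon\Z\to B$ for the $\Z G$-map sending $1$ to the constant function and $\overline B:=\coker\epsilon$, tensoring $0\to\Z\to B\to\overline B\to 0$ with $M$ gives $0\to M\to M\otimes B\to M\otimes\overline B\to 0$, in which $M\otimes B$ is projective because $M$ is cofibrant. To iterate this I must check that $M\otimes\overline B$ is again cofibrant; this follows from the pointwise multiplication map $B\otimes B\to B$, which splits $\epsilon\otimes\id_B$ and hence yields $B\otimes B\cong B\oplus(\overline B\otimes B)$, so that $M\otimes\overline B\otimes B$ is a direct summand of the projective module $M\otimes B\otimes B$. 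Setting $I^j:=M\otimes\overline B^{\otimes j}\otimes B$ then gives the required projective coresolution, and splicing produces $\widehat P_*$ with $\widehat P_0=P_0$ and $\widehat P_{-1}=M\otimes B$.

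A direct computation of $H^0(\Hom_{\Z G}(\widehat P_*,N))$ identifies it with $\Hom_{\Z G}(M,N)$ modulo those maps $f\colon M\to N$ that factor through the inclusion $\iota\colon M\to M\otimes B$. The heart of the proof --- and the step I expect to be the main obstacle --- is to identify this subgroup with the maps that factor through a \emph{projective} module, so that the quotient is exactly $\hombar_{\Z G}(M,N)$. One inclusion is immediate: since $M$ is cofibrant, $M\otimes B$ is projective, so anything factoring through $\iota$ factors through a projective. For the reverse inclusion I would exploit that $\iota_Q\colon Q\to Q\otimes B$ \emph{splits} for every projective $Q$: untwisting the diagonal action shows $\Z G\otimes B$ is free and that $\iota_{\Z G}$ is a split injection, and this passes to summands. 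Given $f=\beta\alpha$ with $\alpha\colon M\to Q$ and $\beta\colon Q\to N$, naturality of $\iota$ gives $(\alpha\otimes\id_B)\circ\iota=\iota_Q\circ\alpha$, so composing with a retraction of $\iota_Q$ extends $\alpha$, and hence $f$, along $\iota$.

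Putting these together yields $\exthat^0_{\Z G}(M,N)\cong\Hom_{\Z G}(M,N)/(\text{maps through projectives})=\hombar_{\Z G}(M,N)$, and since every map in sight (the comparison $\Hom\to\exthat^0$, the coresolution, and the splittings) is natural in $N$, the resulting isomorphism is the natural one. It is worth noting exactly where cofibrancy enters: it is used to guarantee that $M\otimes B$ is projective, which gives both the validity of this complete resolution and the easy inclusion above, whereas the splitting of $\iota_Q$ holds for arbitrary $M$.
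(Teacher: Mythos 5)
Your core computation is sound, but the proof as written rests on a misquoted theorem, and this is a genuine gap. (For context: the paper itself gives no proof of this lemma to compare against --- it quotes it as Lemma 2.3 of Kropholler's \emph{Modules possessing projective resolutions of finite type}.) The fact you ``recall'' at the outset --- that $\exthat^n_{\Z G}(M,N)=H^n(\Hom_{\Z G}(\widehat P_*,N))$ whenever $\widehat P_*$ is merely an acyclic complex of projectives agreeing with a projective resolution of $M$ in high degrees --- is not what is proved in the literature you cite. Mislin's paper defines $\exthat^*$ via satellites and says nothing about complete resolutions, and the Cornick--Kropholler theorem on computing complete cohomology from a complete resolution requires, as part of the definition of complete resolution, that $\Hom_{\Z G}(\widehat P_*,Q)$ be acyclic for \emph{every projective} $Q$. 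This ``total acyclicity'' condition is not a formality: for general rings an acyclic complex of projectives agreeing with a resolution in high degrees need not compute $\exthat^*$, and the distinction between the weak and strong notions is exactly the point of that paper. So as written, your $H^0$ computation evaluates the cohomology of some complex, but nothing in the proposal identifies that cohomology with $\exthat^0_{\Z G}(M,N)$.

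The gap is fixable with the tools you already deploy, but the verification is real work and must appear. You need acyclicity of $\Hom_{\Z G}(\widehat P_*,Q)$ at every degree. In degrees $\leq 0$ your splitting argument does it: the syzygies there are the cofibrant modules $M\otimes\overline B^{\otimes j}$ (and $M$ itself at the splice), and for cofibrant $C$ any map $\phi\colon C\to Q$ with $Q$ projective satisfies $\phi=r_Q\circ(\phi\otimes\id_B)\circ\iota_C$, hence extends along $C\to C\otimes B$. In degrees $\geq 1$, however, the condition says precisely that $\EXT^j_{\Z G}(M,Q)=0$ for all $j\geq 1$ and all projective $Q$, and this vanishing is nowhere addressed in your proposal. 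It is true for cofibrant $M$, but needs proof: given an extension $0\to Q\to E\to M\to 0$, tensoring with $B$ splits it because $M\otimes B$ is projective; if $\rho\colon E\otimes B\to Q\otimes B$ is a retraction for the resulting split inclusion, then $r_Q\circ\rho\circ\iota_E\colon E\to Q$ retracts $Q\to E$ (using naturality of $\iota$), so $\EXT^1_{\Z G}(M,Q)=0$; the higher vanishing then follows by dimension shifting, after observing that syzygies of cofibrant modules are again cofibrant (tensor $0\to\Omega M\to P_0\to M\to 0$ with $B$: the sequence splits, exhibiting $\Omega M\otimes B$ as a summand of the projective $P_0\otimes B$). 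With these checks added, your spliced complex is a complete resolution in the sense of Cornick--Kropholler, their comparison theorem (which is compatible with the canonical maps out of $\Hom_{\Z G}(M,N)$, so that your isomorphism really is the natural map) applies, and the rest of your argument goes through.
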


We also need the following lemma:

\begin{lemma}\label{cofib mods over a finite gp lemma}
  Let $G$ be a finite group, and $V$ be a $\Z G$-module. Then $V$
  is cofibrant if and only if $V$ is free as a $\Z$-module.
\end{lemma}

\begin{proof}

  Let $B:=B(G,\Z)$ denote the $\Z G$-module of bounded functions from $G$ to $\Z$. First, note that as $G$ is a finite group, $B\cong\Z
  G$.

  Suppose that $V$ is free as a $\Z$-module. Then $V\otimes B\cong
  V\otimes\Z G$ is free as a $\Z G$-module, and hence $V$ is
  cofibrant.

  Conversely, suppose that $V$ is cofibrant, so
  $V\otimes B\cong V\otimes\Z G$ is a projective $\Z G$-module.
  Then $V\otimes\Z G$ is projective as a $\Z$-module, but as $\Z$ is
  a principal ideal domain, every projective $\Z$-module is free.
  Hence, $V\otimes\Z G$ is free as a $\Z$-module, and so it follows
  that $V$ is free as a $\Z$-module.\end{proof}

We can now prove that $M_{\infty}$ is completely flat:

\begin{lemma}\label{completely flat lemma}
  Let $G$ be an $\lhf$-group, and $M$ be any $\Z G$-module. Then
  the module $M_{\infty}$, constructed as in Definition
  $\ref{Minfty}$, is completely flat.
\end{lemma}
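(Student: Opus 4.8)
The plan is to reduce, via Proposition \ref{reduction to fin subgp prop}, to checking complete flatness over the finite subgroups of $G$, and then to reduce that in turn to a single vanishing statement about the basic modules, which is where the construction of Definition \ref{Minfty} does its work. Concretely, fix a finite subgroup $K\le G$ and a completely finitary $\Z K$-module $W$; I want $\exthat^0_{\Z K}(W,M_\infty)=0$. Using Lemma \ref{cofib mods over a finite gp lemma} I may first replace $W$ by a syzygy so as to assume it is cofibrant (a submodule of a free $\Z K$-module is free over $\Z$, hence cofibrant, and a syzygy of a completely finitary module is again completely finitary), at the cost of a shift in cohomological degree. Then $W\otimes_{\Z K}\Z G$ is a basic module, and by the Eckmann--Shapiro Lemma \ref{Eckmann--Shapiro Lemma} the group $\exthat^n_{\Z K}(W,M_\infty)$ is identified with $\exthat^n_{\Z G}(W\otimes_{\Z K}\Z G,M_\infty)$. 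Thus everything comes down to proving the main claim that $\exthat^n_{\Z G}(C,M_\infty)=0$ for every basic module $C$ and every integer $n$.

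To prove this claim I would first observe that a basic module $C=U\otimes_{\Z K}\Z G$ is itself completely finitary over $\Z G$: by the Eckmann--Shapiro Lemma \ref{Eckmann--Shapiro Lemma} its complete cohomology functors agree with those of the completely finitary module $U$ composed with restriction, and restriction commutes with filtered colimits. Since $M_\infty=\colim_m M_m$, it follows that $\exthat^n_{\Z G}(C,M_\infty)=\colim_m\exthat^n_{\Z G}(C,M_m)$, so it suffices to show that each transition map $\exthat^n_{\Z G}(C,M_m)\to\exthat^n_{\Z G}(C,M_{m+1})$ vanishes.

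For $n=0$ this is the crux. Because basic modules are cofibrant, Lemma \ref{hombar lemma} identifies $\exthat^0_{\Z G}(C,M_m)$ with $\hombar_{\Z G}(C,M_m)$, so every class is represented by an honest homomorphism $\phi\colon C\to M_m$. By property (iii) of Definition \ref{Minfty}, $\phi$ factors as $C\to C_m\xrightarrow{\psi}M_m$. Now $M_{m+1}$ is the pushout of $\psi$ along the inclusion $C_m\to P_m=C_m\otimes B$, so in $M_{m+1}$ the image of $\psi$ is identified, up to sign, with the image of $C_m$ in the projective summand $P_m$; consequently the composite $C\to M_m\to M_{m+1}$ factors through the projective module $P_m$ and therefore represents the zero class in $\hombar_{\Z G}(C,M_{m+1})\cong\exthat^0_{\Z G}(C,M_{m+1})$. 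Hence the transition maps vanish and $\exthat^0_{\Z G}(C,M_\infty)=0$.

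Finally I must upgrade this to the range of degrees demanded by the reduction. For non-negative $n$ it is immediate, since the class of basic modules is closed under syzygies: if $0\to\Omega_{\Z K}U\to F\to U\to 0$ begins a free resolution over $\Z K$, then the induced module $(\Omega^n_{\Z K}U)\otimes_{\Z K}\Z G$ is the $n$-th syzygy of $C$ over $\Z G$ and is again basic, so the dimension-shift isomorphism $\exthat^n_{\Z G}(C,-)\cong\exthat^0_{\Z G}(\Omega^n_{\Z G}C,-)$ reduces matters to the case already treated. The negative degrees, which is precisely where the syzygy of $W$ in the reduction lands, are the main obstacle: there one can no longer represent classes by honest homomorphisms, and must instead argue with complete resolutions, showing that the transition $M_m\to M_{m+1}$ still induces zero on $\exthat^n_{\Z G}(C,-)$ because any cocycle into $M_m$ becomes a coboundary after being routed through the projective module $P_m$. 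Granting this, the reduction of the first paragraph gives $\exthat^0_{\Z K}(W,M_\infty)=0$ for every finite $K\le G$ and every completely finitary $\Z K$-module $W$, and Proposition \ref{reduction to fin subgp prop} then shows that $M_\infty$ is completely flat.
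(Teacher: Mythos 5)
Your overall strategy matches the paper's — reduce to finite subgroups via Proposition \ref{reduction to fin subgp prop} and Lemma \ref{Eckmann--Shapiro Lemma}, reduce further to a vanishing statement about basic modules, and then kill classes one transition map at a time using Lemma \ref{hombar lemma}, property (iii) of Definition \ref{Minfty} and the pushout — and your degree-zero argument is exactly the right one. But the way you pass from an arbitrary completely finitary $\Z K$-module $W$ to a cofibrant one creates a genuine gap, which you acknowledge but never close. Replacing $W$ by a syzygy shifts degree: from $\Omega W\mono F\epi W$ one gets $\exthat^0_{\Z K}(W,M_\infty)\cong\exthat^{-1}_{\Z K}(\Omega W,M_\infty)$, so what your reduction actually requires is the vanishing of $\exthat^{-1}_{\Z G}(C,M_\infty)$ for basic $C$. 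In negative degrees the whole mechanism of your proof is unavailable: Lemma \ref{hombar lemma} only identifies $\exthat^0$ with $\hombar$, so classes are no longer represented by honest homomorphisms $C\ra M_m$, and property (iii) of Definition \ref{Minfty} says nothing about maps between syzygies (a negative-degree class is represented by something like a map $\Omega^{j+1}C\ra\Omega^{j}M_m$ for large $j$, and nothing in the construction forces such a map to die in $M_{m+1}$). Your final ``granting this'' is precisely the missing proof, and it is not a routine extension of the degree-zero case.

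The idea you are missing is how the paper does the reduction with \emph{no} degree shift: since $K$ is finite, $U$ admits a complete resolution in the sense of \cite{completeres}, and the zeroth kernel $V$ of such a resolution is stably isomorphic to $U$ itself (not to a syzygy of $U$), while also being a submodule of a projective module, hence free as a $\Z$-module, hence cofibrant by Lemma \ref{cofib mods over a finite gp lemma}. Replacing $U$ by $V$ therefore keeps everything in degree zero, where $\exthat^0_{\Z G}(U\otimes_{\Z K}\Z G,M_\infty)\cong\exthat^0_{\Z G}(V\otimes_{\Z K}\Z G,M_\infty)$ is exactly the statement your colimit-plus-factoring argument proves. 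Note that with this stable replacement in hand your negative-degree claim would follow as well (every basic module becomes, stably, a syzygy of a completely finitary module), which is a sign that the complete-resolution input is not a convenience but the essential point; your positive-degree dimension shifting, by contrast, is correct but not needed.
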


\begin{proof}
  This is a generalization of Lemma $4.1$ in
  \cite{polyelementary}:

  As $G$ belongs to $\lhf$, we see from Proposition \ref{reduction to fin subgp
  prop} that it is enough to show that $M_{\infty}$ is completely flat
  over $\Z K$ for all finite subgroups $K$ of $G$. By Lemma \ref{Eckmann--Shapiro Lemma} it is then enough to show that
  $$\exthat^0_{\Z G}(U\otimes_{\Z K}\Z G,M_{\infty})=0$$ for every
  finite subgroup $K$ of $G$ and every completely finitary $\Z
  K$-module $U$.

  Fix $K$ and $U$. As $K$ is finite, $U$ has a complete resolution in the
  sense of \cite{completeres}. Let $V$ be the zeroth kernel in one
  such resolution, so $V$ is a submodule of a projective $\Z
  K$-module. Therefore, $V$ is free as a $\Z$-module and it then follows from Lemma \ref{cofib mods over a finite gp
  lemma} that $V$ is cofibrant as a $\Z K$-module. Then, as $U$ is stably
  isomorphic to $V$, (that is, $U$ and $V$ are isomorphic as objects of the stable category of $\Z K$-modules, defined at       the beginning of this section), it is
  enough to prove that $$\exthat^0_{\Z G}(V\otimes_{\Z K}\Z
  G,M_{\infty})=0.$$ Therefore, we only need to show that $\exthat^0_{\Z
  G}(C,M_{\infty})=0$ for all basic $\Z G$-modules $C$.

  Let $C$ be a basic $\Z G$-module. As $C$ is cofibrant, it
  follows from Lemma \ref{hombar lemma} that the natural map
  $$\hombar_{\Z G}(C,M_{\infty})\ra\exthat^0_{\Z
  G}(C,M_{\infty})$$ is an isomorphism. Let $\phi\in\hombar_{\Z
  G}(C,M_{\infty})$. As $C$ is basic, it is completely finitary,
  and we see that the natural map $$\colim_n\hombar_{\Z
  G}(C,M_n)\ra\hombar_{\Z G}(C,M_{\infty})$$ is an isomorphism.
  Therefore, we can view $\phi$ as an element of
  $\colim_n\hombar_{\Z G}(C,M_n)$, and so $\phi$ is represented by
  some $\widetilde{\phi}\in\hombar_{\Z G}(C,M_n)$ for some $n$.
  Then, as the following diagram commutes: $$\xymatrix{\hombar_{\Z
  G}(C,M_n)\ar[r]\ar[rd] & \colim_n\hombar_{\Z G}(C,M_n)\ar[d]\\
  \blah & \hombar_{\Z G}(C,M_{\infty})}$$ we see that $\phi$ is in
  fact the image of $\widetilde{\phi}$ under the map $$\hombar_{\Z
  G}(C,\iota):\hombar_{\Z G}(C,M_n)\ra\hombar_{\Z
  G}(C,M_{\infty})$$ induced by the natural map $\iota:M_n\ra
  M_{\infty}$.

  The image $\hombar_{\Z G}(C,\iota)(\widetilde{\phi})$ is defined
  as follows: As $\widetilde{\phi}\in\hombar_{\Z G}(C,M_n)$, it is
  represented by some map $\alpha:C\ra M_n$. We can then consider
  the map $$f:C\stackrel{\alpha}{\ra} M_n\stackrel{\iota}{\ra}
  M_{\infty}.$$ Let $\overline{f}$ denote the image of $f$ in
  $\hombar_{\Z G}(C,M_{\infty})$. Then $$\hombar_{\Z
  G}(C,\iota)(\widetilde{\phi}):=\overline{f}.$$

  Now, by construction, we see that the composite $C\ra
  M_n\hookrightarrow M_{n+1}$ factors through the projective
  module $P_n$. Hence, $f$ factors through a projective, and so
  $\overline{f}=0$. We then conclude that $\hombar_{\Z
  G}(C,M_{\infty})=0$, and so $\exthat^0_{\Z G}(C,M_{\infty})=0$,
  and therefore $M_{\infty}$ is completely flat over $\Z G$, as
  required.\end{proof}

Next, recall the following variation on Schanuel's Lemma (Lemma
$3.1$ in \cite{polyelementary}):

\begin{lemma}\label{schanuel}
  Let $$M''\stackrel{\iota}{\mono} M\stackrel{\pi}{\epi} M'$$ be
  any short exact sequence of $R$-modules in which $\pi$ factors
  through a projective module $Q$. Then $M$ is isomorphic to a
  direct summand of $Q\oplus M''$.
\end{lemma}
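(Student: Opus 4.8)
The plan is to prove this variation on Schanuel's Lemma by exploiting the hypothesis that the epimorphism $\pi$ factors through a projective module. Suppose we have the short exact sequence $M''\mono M\stackrel{\pi}{\epi} M'$, and write the factorization $\pi = \beta\alpha$ where $\alpha\colon M\ra Q$ and $\beta\colon Q\ra M'$ with $Q$ projective. The key observation is that $\beta\colon Q\epi M'$ must be surjective (since $\pi$ is), so because $Q$ is projective we obtain a splitting map $s\colon M'\ra Q$ with $\beta s = \id_{M'}$.

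The central step I would carry out is to construct an explicit isomorphism between $M$ and a direct summand of $Q\oplus M''$. First I would form the map $Q\oplus M''\ra M$ as follows: one needs a map $Q\ra M$, which I would obtain by a diagram chase using the splitting. The natural candidate is to consider the composite $s\beta\colon Q\ra Q$ and the difference $\id_Q - s\beta$, whose image lands in $\ker\beta$; then pulling back along $\alpha$ and using the inclusion $\iota\colon M''\mono M$ should assemble the required maps. Concretely, I expect to define a splitting of the inclusion of $M$ into $Q\oplus M''$ (or vice versa) by checking that the relevant composites are identities using $\beta s = \id$ and the exactness of the original sequence.

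The cleanest route is probably to observe that the sequence $\ker\beta\mono Q\stackrel{\beta}{\epi} M'$ splits as $Q\cong\ker\beta\oplus M'$ via the section $s$, and then to compare this with the original sequence by building a pullback square. Specifically, I would form the pullback of $\alpha\colon M\ra Q$ along the inclusion $\ker\beta\mono Q$, or alternatively push the splitting data back through $\alpha$; the upshot is a short exact sequence relating $M$ to $M''$ and $\ker\beta$ that splits, giving $Q\oplus M''\cong M\oplus(\text{something})$ or identifying $M$ directly as a summand.

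The main obstacle will be bookkeeping: getting the maps between $M$, $M''$, $Q$, and $M'$ to commute and verifying that the candidate retraction genuinely splits, rather than the conceptual content, which is just that projectivity lets us lift the identity on $M'$ through $\beta$. I would expect the proof to reduce to a short diagram chase once the splitting $s$ is in hand, so I would keep the argument at the level of constructing the two maps $M\ra Q\oplus M''$ and $Q\oplus M''\ra M$ and checking one composite is the identity on $M$.
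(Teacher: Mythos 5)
There is a genuine gap at the very first concrete step of your plan, and it is fatal to the route you chose. From the surjectivity of $\beta\colon Q\epi M'$ and the projectivity of $Q$ you cannot conclude that $\beta$ splits: projectivity of $Q$ lets you lift maps \emph{out of} $Q$ through surjections, whereas a splitting $s\colon M'\ra Q$ with $\beta s=\id_{M'}$ is a lifting of $\id_{M'}$ through $\beta$, which would require $M'$ (not $Q$) to be projective. Indeed, if such an $s$ existed, $M'$ would be a direct summand of $Q$ and hence projective, which need not hold. Concretely, take $R=\Z$, $M=Q=\Z$, $M''=2\Z$, $M'=\Z/2\Z$, with $\pi$ the quotient map factored as $\pi=\pi\circ\id_{\Z}$: here $\Hom_{\Z}(\Z/2\Z,\Z)=0$, so no splitting $s$ exists, even though the conclusion of the lemma holds ($\Z$ is a summand of $\Z\oplus 2\Z$). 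Consequently the decomposition $Q\iso\ker\beta\oplus M'$, and everything you build on it, collapses.

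The correct argument uses projectivity in the opposite direction (note the paper itself does not prove this lemma; it quotes Lemma $3.1$ of \cite{polyelementary}). Since $Q$ is projective and $\pi$ is surjective, lift $\beta$ through $\pi$: there is $\gamma\colon Q\ra M$ with $\pi\gamma=\beta$. Then $\pi(\id_M-\gamma\alpha)=\pi-\beta\alpha=0$, so $\id_M-\gamma\alpha$ takes values in $\ker\pi=\im\iota$, giving $\delta\colon M\ra M''$ with $\iota\delta=\id_M-\gamma\alpha$. The maps $(\alpha,\delta)\colon M\ra Q\oplus M''$ and $(\gamma,\iota)\colon Q\oplus M''\ra M$ satisfy $\gamma\alpha+\iota\delta=\id_M$, so $M$ is a direct summand of $Q\oplus M''$, as required. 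If you prefer your pullback language: form the pullback $P$ of $\beta$ and $\pi$ (not the pullback of $\alpha$ along $\ker\beta\mono Q$); the projection $P\epi Q$ has kernel $M''$ and splits because $Q$ is projective (a surjection \emph{onto} a projective splits), while the projection $P\epi M$ splits because $m\mapsto(\alpha(m),m)$ is a section, and comparing the two decompositions of $P$ gives $M\oplus\ker\beta\iso Q\oplus M''$.
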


We now use the fact that the $\Z G$-module $M_{\infty}$ is
completely flat to prove the following:

\begin{prop}\label{divides p plus k prop}
  Let $G$ be an $\lhf$-group and $M$ be a completely finitary,
  cofibrant $\Z G$-module. Then $M$ is isomorphic to a direct
  summand of the direct sum of a poly-basic module and a
  projective module.
\end{prop}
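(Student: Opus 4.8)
The plan is to exploit the fact that $M_\infty$ is completely flat (Lemma \ref{completely flat lemma}) together with the hypothesis that $M$ is completely finitary, in order to recognise the inclusion $M = M_0 \hookrightarrow M_\infty$ as a map that vanishes stably, and in fact already at a finite stage. First I would observe that, since $M$ is completely finitary and $M_\infty$ is completely flat, the definition of complete flatness gives $\exthat^0_{\Z G}(M, M_\infty) = 0$. As $M$ is cofibrant, Lemma \ref{hombar lemma} identifies this group with $\hombar_{\Z G}(M, M_\infty)$, and as $M$ is completely finitary the functor $\hombar_{\Z G}(M,-) \cong \exthat^0_{\Z G}(M,-)$ commutes with the filtered colimit $M_\infty = \colim_n M_n$. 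Hence $\colim_n \hombar_{\Z G}(M, M_n) \cong \hombar_{\Z G}(M, M_\infty) = 0$, so the class of the inclusion $M \hookrightarrow M_n$ vanishes in $\hombar_{\Z G}(M, M_n)$ for some $n$; that is, this inclusion factors through a projective module.

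Next I would unwind the construction of Definition \ref{Minfty} so as to make this factorisation bear on $M$ itself. Splicing the short exact sequences $C_i \mono M_i \oplus P_i \epi M_{i+1}$ for $0 \le i < n$ yields a short exact sequence $K \mono M \oplus \Pi \epi M_n$ in which $\Pi = \bigoplus_{i<n} P_i$ is projective and $K$ carries a finite filtration with sections $C_0, \dots, C_{n-1}$, and in which the composite $M \hookrightarrow M \oplus \Pi \epi M_n$ is precisely the inclusion of $M_0$ into $M_n$. Using Proposition \ref{colimit prop} — more exactly the passage to the finite subsets $\XX_{\lambda_i} \subseteq \XX_i$ underlying its proof — I would express this as a filtered colimit of short exact sequences $K^{(\mu)} \mono M \oplus \Pi^{(\mu)} \xrightarrow{q^{(\mu)}} N^{(\mu)}$ with $M_n = \colim_\mu N^{(\mu)}$, where each $\Pi^{(\mu)}$ is projective and each $K^{(\mu)}$, being filtered by finite direct sums of basic modules, is poly-basic, and where the structural maps $M \to N^{(\mu)}$ have the inclusion $M \hookrightarrow M_n$ as their colimit. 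I expect this bookkeeping — assembling the spliced kernel $K$ as a filtered colimit of poly-basic modules compatibly with the maps out of $M$ — to be the main obstacle, the delicate point being that one must here decompose a single fixed short exact sequence rather than compose colimits, so that no honest (as opposed to stable) map need be factored through a colimit stage.

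Finally I would descend once more and invoke Schanuel's lemma. Since $M$ is completely finitary and cofibrant, $\hombar_{\Z G}(M, M_n) \cong \colim_\mu \hombar_{\Z G}(M, N^{(\mu)})$, and the vanishing of the class of the inclusion $M \hookrightarrow M_n$ therefore forces its image in $\hombar_{\Z G}(M, N^{(\mu)})$ to vanish for some $\mu$; thus the structural map $M \to N^{(\mu)}$ factors through a projective. Combining this factorisation with the projectivity of $\Pi^{(\mu)}$ shows that the surjection $q^{(\mu)} \colon M \oplus \Pi^{(\mu)} \epi N^{(\mu)}$ itself factors through a projective module. Lemma \ref{schanuel} then identifies the middle term $M \oplus \Pi^{(\mu)}$ with a direct summand of the direct sum of a projective module and $K^{(\mu)}$; since $\Pi^{(\mu)}$ is projective and $K^{(\mu)}$ is poly-basic, it follows that $M$, being a summand of $M \oplus \Pi^{(\mu)}$, is a direct summand of the direct sum of a projective module and a poly-basic module, as required.
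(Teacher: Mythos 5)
Your first and third paragraphs are precisely the paper's argument: complete flatness of $M_\infty$ (Lemma \ref{completely flat lemma}) together with Lemma \ref{hombar lemma} and complete finitariness gives the stable vanishing of $M\hookrightarrow M_n$ at a finite stage, and the concluding appeal to Lemma \ref{schanuel} is also the paper's. The genuine gap is in your second paragraph. After splicing the sequences $C_i\mono M_i\oplus P_i\epi M_{i+1}$ into a single sequence $K\mono M\oplus\Pi\epi M_n$ (which is fine), you assert that it decomposes as a filtered colimit of sequences $K^{(\mu)}\mono M\oplus\Pi^{(\mu)}\epi N^{(\mu)}$ with each $K^{(\mu)}$ poly-basic, on the grounds that decomposing a fixed sequence requires ``no honest (as opposed to stable) map'' to be factored through a colimit stage. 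That is exactly backwards. The natural candidate indexed by $\mu=(\lambda_0,\dots,\lambda_{n-1})$ is $K^{(\mu)}:=K\cap(M\oplus\Pi^{(\mu)})$, and this does give a filtered system of short exact sequences with the correct colimit; but the filtration that $K^{(\mu)}$ inherits from $K$ has $i$th section
$$\{\,c\in C_{\lambda_i}\;:\;\phi_i(c)\in\im(M\oplus P_{\lambda_0}\oplus\cdots\oplus P_{\lambda_{i-1}}\ra M_i)\,\},$$
where $\phi_i\colon C_i\ra M_i$ is the map chosen in Definition \ref{Minfty}: a lift of $c\in C_{\lambda_i}$ into $M\oplus\Pi^{(\mu)}$ exists precisely when $\phi_i(c)$ is hit by the finite stage. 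This is a submodule of $C_{\lambda_i}$, not $C_{\lambda_i}$ itself, and there is no reason for it to be poly-basic. Forcing the sections to equal the $C_{\lambda_i}$ is literally the requirement that the honest maps $\phi_i$, restricted to the chosen finite sub-sums, factor through the finite stages --- the very factorization you claim to have avoided --- and such factorizations need not exist, because basic modules are in general not finitely generated, while complete finitariness controls stable maps only. So the step you dismissed as bookkeeping is the step on which the whole proof rests, and it is not established.

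The paper never faces this one-shot decomposition, because it descends the tower one level at a time, from the top down. Proposition \ref{colimit prop} is applied to $M_n$, giving $M_n=\colim_{\lambda_{n-1}}(M_{n-1}\oplus P_{\lambda_{n-1}})/C_{\lambda_{n-1}}$, where the kernels genuinely are the finite sub-sums $C_{\lambda_{n-1}}$ because every term of this colimit contains the full module $M_{n-1}$ receiving the attaching maps; finitariness then fixes one $\lambda_{n-1}$ for which $M\ra M_{\lambda_{n-1}}$ is stably zero. The same pair of steps is repeated at level $n-2$, and so on down to $\lambda_0$: the finitariness argument is invoked once per level rather than just twice, and splicing is performed only at the very end, when every kernel in sight is a fixed finite direct sum of basic modules, so that the spliced kernel is visibly poly-basic before Lemma \ref{schanuel} is applied. (Your instinct that compatibility of attaching maps with finite stages is the delicate point is sound --- a version of it is implicitly used in the paper's own later iterations --- but your architecture needs it wholesale merely to define the colimit system, whereas the level-by-level descent confronts one fixed finite sub-sum at a time.) To salvage your version you would have to exhibit a directed exhaustion of $K$ by poly-basic submodules compatible with $\Pi=\bigcup_\mu\Pi^{(\mu)}$; that is the real content, and it is exactly what the paper's iteration is designed to replace.
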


\begin{proof}
  This is a generalization of an argument found in \S$4$ of
  \cite{polyelementary}:

  As in Definition \ref{Minfty}, construct the chain
  $$M=M_0\subseteq M_1\subseteq M_2\subseteq\cdots$$  of $\Z G$-modules, and let $M_{\infty}:=\colim_n M_n$.
  As $G$ belongs to $\lhf$, we see from Lemma \ref{completely flat lemma} that
  $M_{\infty}$ is completely flat, and so $$\exthat^0_{\Z G}(M,M_{\infty})=0.$$
  Also, as $M$ is cofibrant, it follows from Lemma \ref{hombar
  lemma} that $$\hombar_{\Z G}(M,M_{\infty})=0.$$ Then, as $M$ is
  completely finitary, we see that $$\colim_n\hombar_{\Z
  G}(M,M_n)=0.$$ Therefore, there must be some $n$ such that the
  identity map on $M$ maps to zero in $\hombar_{\Z G}(M,M_n)$.
  Hence, we see that the inclusion $M\hookrightarrow M_n$
  factors through a projective module. By Proposition \ref{colimit prop}, we can write $M_n$ as a filtered colimit,
  $$M_n=\colim_{\lambda_{n-1}}\frac{M_{n-1}\oplus
  P_{\lambda_{n-1}}}{C_{\lambda_{n-1}}}:=\colim_{\lambda_{n-1}}
  M_{\lambda_{n-1}},$$ where each
  $P_{\lambda_{n-1}}$ is projective and each $C_{\lambda_{n-1}}$
  is poly-basic. Then, as $M$ is completely finitary, a similar argument to above shows that
  there is some $\lambda_{n-1}$ such that the inclusion $M\hookrightarrow
  M_{\lambda_{n-1}}$ factors through a projective module.

  Now, we can also write $M_{\lambda_{n-1}}$ as a filtered colimit:
  $$M_{\lambda_{n-1}}=\colim_{\lambda_{n-2}}\frac{(\frac{M_{n-2}\oplus
  P_{\lambda_{n-2}}}{C_{\lambda_{n-2}}})\oplus
  P_{\lambda_{n-1}}}{C_{\lambda_{n-1}}}:=\colim_{\lambda_{n-2}}
  M_{\lambda_{n-2}},$$ and we continue as above.

  Continuing in this way, we eventually obtain a map $M\hookrightarrow M_{\lambda_0}$ which
  factors through a projective module $Q$.
  Now, $M_{\lambda_0}$ has been
  constructed in such a way that we have a short exact sequence $$K\mono
  M\oplus P\epi M_{\lambda_0},$$ where
  $P:=P_{\lambda_0}\oplus \cdots\oplus
  P_{\lambda_{n-1}}$, and $K$ admits a filtration $$0=K_{-1}\leq
  K_0\leq \cdots\leq K_{n-1}=K,$$ with each $K_i/K_{i-1}$
  isomorphic to $C_{\lambda_i}$. We see that the second map in the above short
  exact sequence must factor through $P\oplus Q$, and as $K$
  is clearly poly-basic, the result now follows from Lemma
  \ref{schanuel}.\end{proof}

We can now prove the following:

\begin{prop}\label{M divides module with even fg proj res prop}
  Let $G$ be an $\lhf$-group, and $M$ be a completely finitary, cofibrant $\Z G$-module. Then $M$ is
  isomorphic to a direct summand of a $\Z G$-module which has a
  projective resolution that is eventually finitely generated.
\end{prop}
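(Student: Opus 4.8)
The plan is to build directly on Proposition \ref{divides p plus k prop}, which (under exactly the present hypotheses) exhibits $M$ as a direct summand of $K\oplus P$ with $K$ poly-basic and $P$ projective. Since the conclusion only asks that $M$ be a direct summand of \emph{some} module having an eventually finitely generated projective resolution, it suffices to produce such a resolution for $K\oplus P$ itself. A projective module is its own resolution (zero in positive degrees), so $P$ trivially qualifies, and the class of modules admitting an eventually finitely generated projective resolution is closed under finite direct sums and under extensions: for an extension the horseshoe lemma gives a resolution with terms $P_n^A\oplus P_n^C$, which is finitely generated as soon as both summands are, i.e.\ in all degrees past $\max$ of the two thresholds. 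As $K$ is poly-basic, a finite induction on the length of its basic series reduces the whole problem to a single basic module $U\otimes_{\Z K}\Z G$.

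For a basic module I would push the question down to the finite subgroup $K$ via induction. The functor $-\otimes_{\Z K}\Z G$ is exact, because $\Z G$ is free over $\Z K$, and it carries finitely generated projective $\Z K$-modules to finitely generated projective $\Z G$-modules: a summand of $(\Z K)^r$ induces to a summand of $(\Z K)^r\otimes_{\Z K}\Z G\cong(\Z G)^r$. Hence it transports an eventually finitely generated projective resolution of $U$ over $\Z K$ to one of $U\otimes_{\Z K}\Z G$ over $\Z G$, and I am reduced to proving the statement over the finite group $K$.

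Thus everything comes down to the finite-group case: a completely finitary, cofibrant $\Z K$-module $U$ (with $K$ finite) has an eventually finitely generated projective resolution. Here $\Z K$ is Noetherian, and by Lemma \ref{cofib mods over a finite gp lemma} the module $U$ is $\Z$-free, so it admits a complete resolution; comparing this with an ordinary projective resolution, the natural map $\EXT^n_{\Z K}(U,-)\to\exthat^n_{\Z K}(U,-)$ is an isomorphism for all sufficiently large $n$. Since $U$ is completely finitary, it follows that $\EXT^n_{\Z K}(U,-)$ is finitary for all sufficiently large $n$. From this I would argue that a sufficiently high syzygy of $U$ is finitely generated modulo projective summands, and then, using Noetherianity of $\Z K$, resolve that syzygy by finitely generated projectives; splicing this onto an arbitrary projective resolution in the low degrees yields the required eventually finitely generated resolution.

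I expect this last step over the finite group to be the main obstacle. High-degree finitariness of $\EXT^n_{\Z K}(U,-)$ is the ``wrong end'' for the usual Bieri--Brown criterion, which detects finite generation through \emph{low}-degree Ext, and $U$ itself need not be finitely generated, since being completely finitary is a stable condition blind to projective summands (for example $\Z\oplus(\Z K)^{(\infty)}$ is completely finitary yet infinitely generated). The genuine content is therefore to extract finite generation of a high syzygy up to projectives --- that is, to show $U$ is stably $\fpinfty$ --- from the stable finitariness hypothesis; this is the only point at which an honest finiteness conclusion is produced, all the preceding reductions being formal. I would settle it either by a direct argument with the complete resolution over $\Z K$ or by appealing to the corresponding finite-group result in \cite{Finitarycohom}.
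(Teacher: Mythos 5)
Your chain of reductions is correct, and it is in substance the same as the paper's: Proposition \ref{divides p plus k prop} replaces $M$ by a poly-basic module plus a projective, the Horseshoe Lemma (with the small extra observation that a summand-of-nice extension of summand-of-nice modules is again a summand of a nice module) reduces poly-basic to basic, and exactness of $-\otimes_{\Z K}\Z G$ together with its preservation of finitely generated projectives pushes the problem down to the finite subgroup $K$. But these reductions are purely formal, as you yourself say, and the step you flag as ``the main obstacle'' --- that a completely finitary, cofibrant $\Z K$-module $U$ over a finite group $K$ is, up to projectives, finitely generated --- is not an auxiliary fact you can outsource: it \emph{is} the proposition, in its essential content, and your proposal contains no proof of it. The two escape routes you offer do not work as stated. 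Citing \cite{Finitarycohom} fails because that paper characterises \emph{groups} whose cohomology is almost everywhere finitary and contains no statement about arbitrary completely finitary cofibrant modules over a finite group ring. The sketch via high-degree ordinary cohomology ($\EXT^n_{\Z K}(U,-)\to\exthat^n_{\Z K}(U,-)$ an isomorphism for $n\gg 0$, hence $\EXT^n_{\Z K}(U,-)$ finitary for $n\gg 0$) is a true but unhelpful reformulation: by dimension shifting it says a high syzygy $V$ has $\EXT^k_{\Z K}(V,-)$ finitary for $k\geq 1$, and, as you correctly observe, no Brown-type criterion extracts finite generation from that end; the only way back down to usable degree-$0$ information is through complete cohomology, which returns you to the hypothesis you started with. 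So the proposal, as written, proves nothing beyond the formal reductions.

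For comparison, here is the argument the paper uses to fill exactly this hole; it works directly in degree $0$ and needs no syzygies or comparison maps. Write $U=\colim_{\lambda}U_{\lambda}$ as the filtered colimit of its \emph{finitely presented} submodules. Since $U$ is cofibrant, Lemma \ref{hombar lemma} identifies $\hombar_{\Z K}(U,-)$ with $\exthat^0_{\Z K}(U,-)$, which is finitary because $U$ is completely finitary. Applying this to the filtered system $(U/U_{\lambda})$, whose colimit is zero, gives $\colim_{\lambda}\hombar_{\Z K}(U,U/U_{\lambda})=0$, so for some $\lambda$ the quotient map $U\epi U/U_{\lambda}$ factors through a projective $Q$; the Schanuel-type Lemma \ref{schanuel} then exhibits $U$ as a direct summand of $Q\oplus U_{\lambda}$. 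Only now does Noetherianity of $\Z K$ enter: $U_{\lambda}$ is finitely presented, hence of type $\fpinfty$, so $Q\oplus U_{\lambda}$ --- and, after inducing, $Q\otimes_{\Z K}\Z G\oplus U_{\lambda}\otimes_{\Z K}\Z G$ --- has a projective resolution that is eventually finitely generated. Note also that this argument only ever shows $U$ is a \emph{summand of} such a module, which is all the statement requires; your intermediate formulation, that $U$ itself admits an eventually finitely generated resolution, is a strictly stronger claim that you would additionally have to justify.
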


\begin{proof}
  We begin by showing that basic $\Z G$-modules are isomorphic to direct summands of
  $\Z G$-modules with projective resolutions that are eventually finitely generated. Recall that basic $\Z
  G$-modules are of the form $U\otimes_{\Z K}\Z G$, where $K$ is a
  finite subgroup of $G$ and $U$ is a completely finitary,
  cofibrant $\Z K$-module. Write $U$ as the filtered colimit of
  its finitely presented submodules, $$U=\colim_{\lambda}
  U_{\lambda}.$$ As $U$ is completely finitary and cofibrant,
  it follows that $\hombar_{\Z K}(U,-)$ is finitary, and so the natural map
  $$\colim_{\lambda}\hombar_{\Z K}(U,U/U_{\lambda})\ra
  \hombar_{\Z K}(U,\colim_{\lambda} U/U_{\lambda})$$ is an isomorphism; that is,
  $$\colim_{\lambda}\hombar_{\Z K}(U,U/U_{\lambda})=0.$$
  Therefore, there must be some $\lambda$ such that the identity
  map on $U$ maps to zero in $\hombar_{\Z K}(U,U/U_{\lambda})$.
  Hence, we see that the surjection $U\epi U/U_{\lambda}$
  factors through a projective $\Z K$-module $Q$. Then, by Lemma
  \ref{schanuel}, we see that $U$ is isomorphic to a direct summand of $Q\oplus
  U_{\lambda}$. Now, as $K$ is finite, every finitely presented
  $\Z K$-module is of type $\fpinfty$, so in particular
  $U_{\lambda}$ is of type $\fpinfty$. Then, as $U\otimes_{\Z K}\Z G$ is isomorphic to a direct
  summand of $Q\otimes_{\Z K}\Z G\oplus U_{\lambda}\otimes_{\Z
  K}\Z G$, where $Q\otimes_{\Z K}\Z G$ is projective, and $U_{\lambda}\otimes_{\Z K}\Z G$
  is of type $\fpinfty$, we see that $U\otimes_{\Z K}\Z G$ is isomorphic to a
  direct summand of a $\Z G$-module with
  a projective resolution that is eventually finitely
  generated.

  Next, as poly-basic modules are built up from basic modules by
  extensions, we see from the Horseshoe Lemma that every
  poly-basic $\Z G$-module is isomorphic to a direct summand of a $\Z G$-module
  with a projective resolution that is eventually finitely
  generated.

  Finally, if $G$ is an $\lhf$-group, and $M$ is a completely finitary,
  cofibrant $\Z G$-module, it follows from Proposition \ref{divides p plus k
  prop} that $M$ is isomorphic to a direct summand of $P\oplus C$, for some projective
  module $P$ and some poly-basic module $C$. Then, as $C$ is isomorphic to a direct summand of a $\Z G$-module with a projective
  resolution that is eventually finitely generated, the result now
  follows.\end{proof}

We now have the following proposition:

\begin{prop}\label{M tensor B has finite pd prop}
  Let $G$ be an $\lhf$-group, and $M$ be a completely finitary $\Z
  G$-module. Also, let $B:=B(G,\Z)$ denote the $\Z G$-module of bounded functions
  from $G$ to $\Z$. Then $M\otimes B$ has finite projective dimension
  over $\Z G$.
\end{prop}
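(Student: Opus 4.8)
The plan is to prove $\projdim_{\Z G}(M\otimes B)<\infty$ through complete cohomology, using the standard criterion that a $\Z G$-module $N$ has finite projective dimension if and only if its identity is trivial in the stable category, i.e. $\exthat^0_{\Z G}(N,N)=0$ (cf. \cite{completeres,Mislin}); concretely, $\id_N=0$ in $\exthat^0_{\Z G}(N,N)$ forces some syzygy of $N$ to be projective. Applied to $N=M\otimes B$ it suffices to show $\exthat^0_{\Z G}(M\otimes B,M\otimes B)=0$. I would obtain this from two facts: that $M\otimes B$ is completely flat, and that $M\otimes B$ is completely finitary; then complete flatness of $M\otimes B$ applied to the completely finitary test module $M\otimes B$ itself gives the vanishing. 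Note that one cannot simply feed $M\otimes B$ into Proposition \ref{M divides module with even fg proj res prop}, since $M\otimes B$ is in general not cofibrant (for example $M\otimes B$ acquires $\Z$-torsion when $M$ does), which is exactly why I route the argument through complete flatness rather than cofibrancy.

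For complete flatness I would prove the more general statement that $N\otimes B$ is completely flat for every $\Z G$-module $N$, following the template of Lemma \ref{completely flat lemma}. Since $G\in\lhf$, Proposition \ref{reduction to fin subgp prop} together with the Eckmann--Shapiro isomorphism of Lemma \ref{Eckmann--Shapiro Lemma} and the complete-resolution reduction used in Lemma \ref{completely flat lemma} reduce the vanishing of $\exthat^0_{\Z G}(X,N\otimes B)$ over all completely finitary $X$ to the single condition $\exthat^0_{\Z G}(C,N\otimes B)=0$ for every basic module $C$. As basic modules are cofibrant, Lemma \ref{hombar lemma} identifies this group with $\hombar_{\Z G}(C,N\otimes B)$, so it is enough to factor an arbitrary $\Z G$-map $f\colon C\to N\otimes B$ through a projective. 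Here the ring structure of $B$ does the work: writing $\iota_C\colon C\to C\otimes B$ for the map induced by the inclusion $\Z\hookrightarrow B$ of the constant functions and $\mu\colon B\otimes B\to B$ for pointwise multiplication, one checks the identity $f=(\id_N\otimes\mu)\circ(f\otimes\id_B)\circ\iota_C$. Thus $f$ factors through $C\otimes B$, which is projective because $C$ is cofibrant, whence $\hombar_{\Z G}(C,N\otimes B)=0$ and $N\otimes B$ is completely flat.

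It remains to show that $M\otimes B$ is completely finitary, and this is the only place the hypothesis on $M$ is consumed. The natural route is the tensor--Hom adjunction $\Hom_{\Z G}(M\otimes B,-)\cong\Hom_{\Z G}(M,\Hom_{\Z}(B,-))$, promoted to complete cohomology, used to transport the commutation with filtered colimits from $M$ across to $M\otimes B$. Granting this, complete flatness of $M\otimes B$ yields $\exthat^0_{\Z G}(M\otimes B,M\otimes B)=0$, and the finiteness criterion completes the proof. I expect this last step to be the main obstacle: complete flatness transcribes Lemma \ref{completely flat lemma} almost verbatim, and the passage from stable triviality of the identity to finiteness of projective dimension is formal, but preservation of the completely finitary property under $-\otimes B$ is delicate, since $\Hom_{\Z}(B,-)$ does not commute with the relevant filtered colimits and $B$ is far from finitely generated. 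Controlling this—most plausibly by exploiting that $M\otimes B$ is already known to be completely flat, together with the $\lhf$ structure—is where the real work lies.
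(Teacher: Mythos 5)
Your complete-flatness step is fine, but your endgame contains a genuine gap, and you flag it yourself: the argument needs $M\otimes B$ to be completely finitary, and neither you nor anything in the paper establishes this. Your proposed route, the adjunction $\Hom_{\Z G}(M\otimes B,-)\cong\Hom_{\Z G}(M,\Hom_{\Z}(B,-))$, fails for exactly the reason you give---$B$ is not finitely generated and $\Hom_{\Z}(B,-)$ does not commute with filtered colimits---and ``exploiting complete flatness and the $\lhf$ structure'' is a hope, not an argument. So the proof as written does not close: you have reduced the proposition to a statement you cannot prove, and which there is no reason to believe.

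What you are missing is that this step is unnecessary, and the tool that replaces it is the very multiplicative retraction you already deployed in the complete-flatness half. Since $M$ is completely finitary (the hypothesis) and $M\otimes B$ is completely flat, you get $\exthat^0_{\Z G}(M,M\otimes B)=0$; in particular the unit map $\nu\colon M\ra M\otimes B$, $m\mapsto m\otimes 1$, is stably trivial, i.e.\ a sufficiently high syzygy of it factors through a projective $Q$. Tensoring that factorization with $B$ shows that $\nu\otimes\id_B$ is stably trivial as well (it factors through $Q\otimes B$, which is again projective because $B$ is free as a $\Z$-module), and hence
$$\id_{M\otimes B}=(\id_M\otimes\mu)\circ(\nu\otimes\id_B)$$
is zero in $\exthat^0_{\Z G}(M\otimes B,M\otimes B)$; your own finiteness criterion then gives $\pd_{\Z G}M\otimes B<\infty$, with no finitariness of $M\otimes B$ ever needed. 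This promotion of $\exthat^0_{\Z G}(M,M\otimes B)=0$ to finiteness of the projective dimension of $M\otimes B$ is precisely Lemma $2.2$ of \cite{Homfinconditions}, which is what the paper invokes at this point. The paper's proof is otherwise parallel to yours, except that its complete-flatness step avoids basic modules entirely: it shows $M\otimes B$ has finite projective dimension over $\Z K$ for each finite subgroup $K$ of $G$ (using \cite{KropTalelli}: $B$ is free over $\Z K$), deduces complete flatness over each $\Z K$ from Lemma $4.2.3$ of \cite{Hierarch}, and then applies Proposition \ref{reduction to fin subgp prop}; your factorization of maps $C\ra N\otimes B$ through the projective $C\otimes B$ via the ring structure of $B$ is a correct alternative for that half.
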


\begin{proof}
  This is a generalization of Proposition $9.2$ in
  \cite{strgpgrdrings}:

  Let $K$ be a finite subgroup of $G$.  We see from the Proposition in \cite{KropTalelli} that $B$ is free as a $\Z K$-module, so $M\otimes B$ is
  a direct sum of copies of $M\otimes\Z K$ as a $\Z K$-module, and hence has finite projective dimension
  over $\Z K$. It then follows from Lemma $4.2.3$ in
  \cite{Hierarch} that $$\exthat^0_{\Z K}(A,M\otimes B)=0$$ for
  any $\Z K$-module $A$. In particular, we see that $M\otimes B$
  is completely flat over $\Z K$. As this holds for any finite
  subgroup $K$ of $G$, we see from Proposition \ref{reduction to fin subgp
  prop} that $M\otimes B$ is completely flat over $\Z G$. Then, as
  $M$ is completely finitary over $\Z G$, we see that
  $$\exthat^0_{\Z G}(M,M\otimes B)=0,$$ and it then follows from Lemma $2.2$ in
  \cite{Homfinconditions} that $M\otimes B$ has finite projective
  dimension over $\Z G$.\end{proof}

\begin{lemma}\label{exists cofib cfin module}
  Let $G$ be an $\lhf$-group with cohomology almost everywhere finitary. Then there is an integer $n\geq 0$ such that in any projective resolution $P_*\epi\Z$ of $G$ the $n$th kernel is a completely finitary, cofibrant module.
\end{lemma}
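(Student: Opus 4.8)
The plan is to realise the required kernel as a sufficiently high syzygy of the trivial module $\Z$, and to establish the two demanded properties separately: being \emph{completely finitary} is handled through complete cohomology and turns out to be insensitive to the choice of syzygy, while being \emph{cofibrant} is forced for high syzygies by Proposition \ref{M tensor B has finite pd prop}. Throughout, for a projective resolution $P_*\epi\Z$ I write $\Omega^k\Z$ for its $k$th syzygy.

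First I would show that $\Z$ itself is a completely finitary $\Z G$-module. By hypothesis there is an integer $N_0$ with $H^m(G,-)$ finitary for all $m\ge N_0$. Mislin's description of complete cohomology (see \cite{Mislin,completeres}) realises, for each $n\in\Z$, the functor $\exthat^n_{\Z G}(\Z,-)$ as the filtered colimit $\colim_k\operatorname{Ext}^{n+k}_{\Z G}(\Omega^k\Z,-)$. For any $k$ with $n+k\ge 1$, dimension shifting in ordinary cohomology gives a natural isomorphism $\operatorname{Ext}^{n+k}_{\Z G}(\Omega^k\Z,-)\cong H^{n+2k}(G,-)$, whose degree $n+2k$ tends to infinity with $k$. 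Hence a cofinal tail of this colimit consists of finitary functors, and since filtered colimits commute with one another a filtered colimit of finitary functors is again finitary. Therefore $\exthat^n_{\Z G}(\Z,-)$ is finitary for every $n\in\Z$, i.e. $\Z$ is completely finitary.

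Next I would note that complete cohomology is a stable invariant admitting dimension shifting in all degrees, so that $\exthat^p_{\Z G}(\Omega^m\Z,-)\cong\exthat^{p-m}_{\Z G}(\Z,-)$ for every $p$ and every $m$; consequently \emph{every} syzygy $\Omega^m\Z$ is completely finitary, precisely because $\Z$ is. For cofibrancy I would apply Proposition \ref{M tensor B has finite pd prop} to $M=\Z$: as $\Z$ is completely finitary, $B\cong\Z\otimes B$ has finite projective dimension over $\Z G$, say $\pd_{\Z G}B=d$. Since $B$ is torsion-free as an abelian group, the functor $-\otimes B$ is exact over $\Z$ and carries projectives to projectives (projective $\Z G$-modules being cofibrant). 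Thus tensoring $P_*\epi\Z$ with $B$ produces a projective resolution $P_*\otimes B\epi B$, whose $m$th kernel is exactly $\Omega^m\Z\otimes B$. For $m\ge d$ this kernel is projective, which is precisely the assertion that $\Omega^m\Z$ is cofibrant.

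Taking $n$ to be any integer at least $d$ (adjusted by the indexing convention for kernels), the $n$th kernel $\Omega^n\Z$ is then simultaneously completely finitary and cofibrant; as $\pd_{\Z G}B$ is independent of the resolution and both arguments were carried out for an arbitrary $P_*\epi\Z$, the conclusion holds for \emph{every} projective resolution, as required. The step I expect to demand the most care is establishing that $\Z$ is completely finitary: one must transport the finitary property through the colimit defining complete cohomology while tracking the dimension-shift bookkeeping, so that only ordinary cohomology in arbitrarily high degrees—where the hypothesis bites—contributes. The remaining ingredients (stable invariance of complete finitariness, and the identification of $\Omega^m\Z\otimes B$ with the $m$th kernel of $P_*\otimes B$) are formal once $\pd_{\Z G}B<\infty$ is supplied by Proposition \ref{M tensor B has finite pd prop}.
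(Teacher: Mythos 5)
Your proposal is correct and is essentially the paper's own proof: the paper simply cites 4.1(ii) of \cite{fp} for the fact that $\Z$ and all of its syzygies are completely finitary (which is exactly the Mislin-colimit argument you spell out), and then, just as you do, applies Proposition \ref{M tensor B has finite pd prop} to $M=\Z$ to get $\pd_{\Z G}B<\infty$ and observes that the $n$th kernel of any projective resolution is then cofibrant, where $n=\pd_{\Z G}B$. The only slip is the direction of your dimension shift---it should read $\exthat^p_{\Z G}(\Omega^m\Z,-)\cong\exthat^{p+m}_{\Z G}(\Z,-)$ rather than $\exthat^{p-m}_{\Z G}(\Z,-)$---but this is harmless, since you have shown that $\exthat^q_{\Z G}(\Z,-)$ is finitary for \emph{every} integer $q$.
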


\begin{proof}
  As $G$ has cohomology almost everywhere finitary, it follows from $4.1$(ii) in \cite{fp} that the trivial $\Z G$-module $\Z$ and every kernel of a projective resolution of $G$ is completely finitary. By Proposition \ref{M tensor B has finite pd prop} it follows that $B$ has finite projective dimension over $\Z G$. If $\pd_{\Z G} B=n$, then clearly the $n$th kernel of any projective resolution of $G$ is cofibrant.
\end{proof}

Next, we have two straightforward results:

\begin{prop}\label{final 2 mods even fg prop}
  Let $R$ be a ring, and suppose that $$0\ra N'\ra N\ra P_n\ra\cdots\ra P_0\ra M\ra
  0$$ is an exact sequence of $R$-modules such that
  the $P_i$ are projective, and $N'$ and $N$ have projective resolutions that
  are eventually finitely generated. Then the partial projective
  resolution $$P_n\ra \cdots\ra P_0\ra M\ra 0$$ of $M$ can be extended to
  a projective resolution that is eventually finitely generated.
\end{prop}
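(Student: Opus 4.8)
The plan is to peel off the left-hand end of the sequence, build a good resolution of the relevant syzygy, and splice. Write
$$K:=\ker(P_n\to P_{n-1})=\im(N\to P_n)$$
for the module at the left end of the given partial resolution (when $n=0$ read this as $K=\ker(P_0\to M)$). Exactness of the displayed sequence shows at once that the inclusion $N'\mono N$ has cokernel $K$, so there is a short exact sequence
$$0\to N'\to N\to K\to 0.$$
Any projective resolution $\cdots\to Q_1\to Q_0\to K\to 0$ can be glued onto the partial resolution of $M$ along the composite $Q_0\epi K\mono P_n$, producing an exact sequence
$$\cdots\to Q_1\to Q_0\to P_n\to\cdots\to P_0\to M\to 0,$$
which is a projective resolution of $M$ extending the given one. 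Since only the finitely many modules $P_0,\dots,P_n$ are inserted, this resolution of $M$ is eventually finitely generated precisely when the chosen resolution of $K$ is. Thus it suffices to produce an eventually finitely generated projective resolution of $K$.

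To do this I would use a mapping cone. By hypothesis we may choose eventually finitely generated projective resolutions $A_\bullet\epi N'$ and $B_\bullet\epi N$, and by the comparison theorem the inclusion $N'\mono N$ lifts to a chain map $f\colon A_\bullet\to B_\bullet$. The mapping cone $C(f)$, with $C(f)_j=B_j\oplus A_{j-1}$, is a non-negatively graded complex of projective modules. Since $A_\bullet$ and $B_\bullet$ are resolutions, its long exact homology sequence collapses to
$$0\to H_1(C(f))\to N'\stackrel{f_*}{\to} N\to H_0(C(f))\to 0$$
with $H_j(C(f))=0$ for $j\geq 2$; as $f_*$ is the injection $N'\mono N$, this forces $H_1(C(f))=0$ and $H_0(C(f))\cong\coker(N'\to N)=K$. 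Hence $C(f)$ is a projective resolution of $K$. Finally, if $A_j$ is finitely generated for $j\geq s$ and $B_j$ for $j\geq t$, then $C(f)_j=B_j\oplus A_{j-1}$ is finitely generated for $j\geq\max(s+1,t)$, so this resolution of $K$ is eventually finitely generated.

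Combining the two steps gives the proposition. The only genuine content is the mapping-cone step — the assertion that in a short exact sequence $0\to N'\to N\to K\to 0$ eventually finitely generated resolutions of the submodule and of the total module yield one for the quotient. The point to check with some care is the homology computation identifying $C(f)$ as a resolution of $K$, and in particular the role of the injectivity of $N'\to N$ in forcing $H_1(C(f))=0$; the splicing and the degreewise finite-generation count are then routine bookkeeping.
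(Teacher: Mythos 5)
Your proof is correct, but it carries out the key step with a different tool than the paper. Both arguments open identically: set $K:=\ker(P_n\ra P_{n-1})$, extract the short exact sequence $N'\mono N\epi K$ from exactness, and reduce everything to producing an eventually finitely generated projective resolution that can be spliced onto $P_n\ra\cdots\ra P_0\ra M\ra 0$. The paper, however, never resolves $K$ itself: it splices in just the degree-zero term $Q_0$ of an eventually finitely generated resolution $Q_*\epi N$ (via $Q_0\epi N\epi K\mono P_n$), and identifies the resulting kernel $\widetilde{K}=\ker(Q_0\ra K)$ as an extension of $N'$ by $L:=\ker(Q_0\ra N)$; since $L$ and $N'$ both have eventually finitely generated resolutions, the Horseshoe Lemma produces one for $\widetilde{K}$, and splicing finishes the proof. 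You instead resolve $K$ directly, as the mapping cone of a lift $f\colon A_\bullet\ra B_\bullet$ of the inclusion $N'\mono N$: injectivity of $N'\ra N$ forces $H_1(C(f))=0$, while $H_0(C(f))\cong\coker(N'\ra N)\cong K$, so $C(f)$ is a projective resolution of $K$ whose terms $B_j\oplus A_{j-1}$ are visibly eventually finitely generated. The two devices are close relatives --- the mapping cone resolves the cokernel of an injection, the Horseshoe Lemma resolves the middle of an extension --- but your route is somewhat more direct (a single construction with explicit degreewise terms, and no auxiliary module $\widetilde{K}$), at the price of the small homology computation identifying the cone as a resolution, which you rightly flag as the point needing care; the paper's route uses the Horseshoe Lemma as a black box and needs no homology of complexes at all. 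Your degree bookkeeping and the $n=0$ edge case are both handled correctly.
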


\begin{proof}
  Let $K:=\ker(P_n\ra P_{n-1})$, so we have the following short exact
  sequence: $$N'\mono N\epi K.$$ Next, let $Q_*\epi N$ be a projective
  resolution of $N$ that is eventually finitely generated, and
  let $L$ denote the zeroth kernel. We then have the following: $$\xymatrix{\widetilde{K}\ar@{>-->}[rd] & L\ar@{>->}[d] & \blah\\
  \blah & Q_0\ar@{->>}[d]\ar@{-->>}[rd] & \blah\\ N'\ar@{>->}[r] &
  N\ar@{->>}[r] & K}$$ where $\widetilde{K}$ is an extension of
  $N'$ by $L$, and since both $N'$ and $L$ have projective
  resolutions that are eventually finitely generated, it follows
  from the Horseshoe Lemma that
  $\widetilde{K}$ also has such a resolution. We then have the
  following exact sequence:
  $$0\ra\widetilde{K}\ra Q_0\ra P_n\ra\cdots\ra P_0\ra M\ra 0,$$
  and the result now follows.\end{proof}

\begin{prop}\label{proj res eventualy fg implies free res
eventually fg prop}
  Let $M$ be an $R$-module. If $M$ has a projective resolution
  that is eventually finitely generated, then $M$ has a free
  resolution that is eventually finitely generated.
\end{prop}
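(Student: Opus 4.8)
The plan is to separate the given resolution into two pieces whose finite-generation behaviour is governed by different mechanisms: the finitely generated tail, which I handle through the theory of $\fpinfty$-modules, and the finitely many bottom terms, which are projective but possibly not finitely generated. Suppose $P_*\epi M$ is a projective resolution with $P_i$ finitely generated for all $i\geq N$, and let $K$ be its $N$th kernel, so that there is an exact sequence $0\ra K\ra P_{N-1}\ra\cdots\ra P_0\ra M\ra 0$ (when $N=0$ this degenerates to $K=M$). Since $\cdots\ra P_{N+1}\ra P_N\ra K\ra 0$ is a resolution of $K$ by finitely generated projectives, $K$ is of type $\fpinfty$, and the task reduces to producing a free resolution of $M$ out of this $\fpinfty$-module $K$ and the finitely many projective terms $P_0,\dots,P_{N-1}$.

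First I would establish the core statement that every $\fpinfty$-module $A$ admits a resolution by finitely generated free modules. As $A$ is finitely generated one may choose a finitely generated free module $G_0\epi A$; writing $A_1$ for its kernel and comparing with the first kernel of the given finitely generated projective resolution of $A$, Schanuel's Lemma (see Lemma \ref{schanuel}) gives an isomorphism of $A_1$, up to a finitely generated free summand, with an $\fpinfty$-module. Since $\fpinfty$ is closed under direct summands, $A_1$ is again $\fpinfty$, in particular finitely generated, so the process may be iterated, yielding a resolution of $A$ by finitely generated free modules. Applying this to $K$ produces a resolution $R_*\epi K$ with every $R_i$ finitely generated free.

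It remains to deal with $P_0,\dots,P_{N-1}$, and this is the \emph{main obstacle}: replacing a projective by a free module through a complementary summand merely reintroduces a projective, while swindling the finitely generated tail would destroy the finite generation we have just secured. The key observation is that a projective module can be given a free resolution of length one without disturbing the tail: for projective $P$ the Eilenberg swindle furnishes a free module $\Psi$ with $P\oplus\Psi\iso\Psi$, whence the projection $\Psi\iso P\oplus\Psi\epi P$ has free kernel $\iso\Psi$ and provides $0\ra\Psi\ra\Psi\ra P\ra 0$. I would then assemble a free resolution of $M$ from these length-one free resolutions of $P_0,\dots,P_{N-1}$ and the finitely generated free resolution $R_*$ of $K$ by splicing along the exact sequence above, concretely by iterated mapping cones (or in the spirit of Proposition \ref{final 2 mods even fg prop}). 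In the resulting free resolution the degree-$d$ term receives a possibly infinitely generated contribution from the resolution of $P_j$ only when $d\in\{j,j+1\}$, whereas for $d\geq N$ it agrees up to a shift with $R_{d-N}$; hence every term in degree $d\geq N+1$ is finitely generated free, and the resolution is eventually finitely generated, as required.
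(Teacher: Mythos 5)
Your proof is correct and takes essentially the same approach as the paper: both pass to the kernel $K$ at the start of the finitely generated tail, observe that $K$ is of type $\fpinfty$ and hence admits a finitely generated free resolution (the paper simply cites this implication, where you prove it by iterating Schanuel's Lemma), and both then use the Eilenberg swindle to make the finitely many bottom projectives free while disturbing only the junction degree. The paper's in-place replacement of each $P_i$ by $P_i\oplus F_i\cong F_i$ and your mapping-cone splicing of the length-one resolutions $0\ra\Psi_j\ra\Psi_j\ra P_j\ra 0$ are the same mechanism with the same bookkeeping.
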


\begin{proof}
  Let $P_*\epi M$ be a projective resolution of $M$ that is
  eventually finitely generated; say $P_j$ is finitely generated
  for all $j\geq n$, and let $$K:=\ker(P_{n-1}\ra P_{n-2}).$$ Then $K$
  is of type $\fpinfty$, and hence of type $\flinfty$. We can therefore choose a free resolution $F_{n+*}\epi K$ of $K$ with all
  the free modules finitely generated. This gives the following
  exact sequence: $$\cdots\ra F_{n+1}\ra F_n\ra
  P_{n-1}\ra\cdots\ra P_2\ra P_1\ra P_0\ra M\ra 0.$$

  Next, recall the Eilenberg trick (Lemma $2.7$ \S VIII in
  \cite{Brown}): For any projective $R$-module $P$, we can
  choose a free $R$-module $F$ such that $P\oplus F\cong F$.
  Therefore, using this, we can replace the projective modules
  $P_i$ in the above exact sequence by free modules $F_i$, at the
  expense of changing $F_n$ to a larger free module $F_n'$. We
  then have the following free resolution $$\cdots\ra F_{n+2}\ra
  F_{n+1}\ra F_n'\ra F_{n-1}\ra\cdots\ra F_0\ra M\ra 0$$ of $M$,
  with the $F_j$ finitely generated for all $j\geq n+1$.\end{proof}

We now have the following proposition (Proposition $5.1$
in \cite{Deltapaper}):

\begin{prop}\label{Kropholler prop}
  Let $X^n$ be an $(n-1)$-connected $n$-dimensional
  $G$-CW-complex, where $n\geq 2$. Let $\phi:F\ra H_n(X^n)$ be a
  surjective $\Z G$-module map from a free $\Z G$-module $F$ to
  the $n$th homology of $X^n$. Then $X^n$ can be embedded into an
  $n$-connected $(n+1)$-dimensional $G$-CW-complex $X^{n+1}$ such
  that $G$ acts freely outside $X^n$ and there is a short exact
  sequence $$0\ra H_{n+1}(X^{n+1})\ra F\ra H_n(X^n)\ra 0.$$
\end{prop}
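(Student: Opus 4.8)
The plan is to construct $X^{n+1}$ from $X^n$ by attaching one free $G$-orbit of $(n+1)$-cells for each element of a $\Z G$-basis of $F$, with the attaching maps arranged so that the connecting homomorphism of the pair $(X^{n+1},X^n)$ is exactly $\phi$. Everything must be kept $\Z G$-equivariant throughout.

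First I would invoke the Hurewicz theorem. Since $n\geq 2$ and $X^n$ is $(n-1)$-connected, it is simply connected, and so there is a $\Z G$-module isomorphism $\pi_n(X^n)\cong H_n(X^n)$ (the $G$-action on $X^n$ makes $\pi_n(X^n)$ a $\Z G$-module, and the Hurewicz map is natural, hence $G$-equivariant). Fix a $\Z G$-basis $\{e_i\}_{i\in I}$ of the free module $F$. For each $i$, represent $\phi(e_i)\in H_n(X^n)$ by a based map $f_i\colon S^n\ra X^n$ via the inverse Hurewicz isomorphism. Then I form $X^{n+1}$ by attaching, for each $i\in I$, a single free cell $G\times D^{n+1}$ along the $G$-equivariant map $G\times S^n\ra X^n$ sending $(g,x)$ to $g\cdot f_i(x)$. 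By construction $X^{n+1}$ is an $(n+1)$-dimensional $G$-CW-complex containing $X^n$, and $G$ permutes the new cells freely, so $G$ acts freely outside $X^n$.

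Next I would read off the homology from the long exact sequence of the pair $(X^{n+1},X^n)$, viewed as a sequence of $\Z G$-modules. Since $X^n$ is $n$-dimensional, $H_{n+1}(X^n)=0$; and since the relative CW-structure has cells only in dimension $n+1$, the relative homology vanishes outside degree $n+1$, while $H_{n+1}(X^{n+1},X^n)$ is the free $\Z G$-module on the attached cells, which the basis $\{e_i\}$ identifies with $F$. The sequence thus collapses to $$0\ra H_{n+1}(X^{n+1})\ra F\stackrel{\partial}{\ra} H_n(X^n)\ra H_n(X^{n+1})\ra 0.$$ The crucial identification is that $\partial$ sends the generator $e_i$ to the image under $f_i$ of the fundamental class of $S^n$, which is exactly $\phi(e_i)$ by the choice of $f_i$ and the Hurewicz isomorphism; hence $\partial=\phi$. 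As $\phi$ is surjective, $H_n(X^{n+1})=\coker\phi=0$, and the remaining portion is the desired short exact sequence $0\ra H_{n+1}(X^{n+1})\ra F\ra H_n(X^n)\ra 0$.

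Finally I would check connectivity. Attaching cells of dimension $n+1$ does not alter homotopy groups below degree $n$, so $X^{n+1}$ stays $(n-1)$-connected; together with $H_n(X^{n+1})=0$ this gives, by Hurewicz again, $\pi_n(X^{n+1})\cong H_n(X^{n+1})=0$, so $X^{n+1}$ is $n$-connected. There is no deep obstacle here; the step needing the most care is the equivariant bookkeeping---attaching \emph{free} orbits so that simultaneously $G$ acts freely outside $X^n$, the relative homology is the free module $F$, and the connecting map is precisely $\phi$---together with the twofold use of Hurewicz, first to represent homology classes by spheres and then to pass back from $H_n=0$ to $n$-connectedness.
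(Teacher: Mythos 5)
Your proof is correct: the paper itself gives no argument for this proposition, quoting it as Proposition 5.1 of Kropholler and Mislin \cite{Deltapaper}, and your equivariant cell-attachment argument---representing each $\phi(e_i)$ by a map $S^n\ra X^n$ via the Hurewicz isomorphism, attaching free orbits $G\times D^{n+1}$, and identifying the connecting homomorphism of the pair $(X^{n+1},X^n)$ with $\phi$---is precisely the standard proof of that result. The one point deserving care, which you do address, is that simple connectivity of $X^n$ (from $(n-1)$-connectedness and $n\geq 2$) is what makes $\pi_n(X^n)$ a well-defined $\Z G$-module and the Hurewicz map equivariant, so that the based representatives $f_i$ can be translated by $G$ without basepoint trouble.
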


Finally, we can now prove the implication (i) $\Rightarrow$ (ii)
of Theorem A.

\begin{theorem}
  Let $G$ be an $\lhf$-group with cohomology almost everywhere
  finitary. Then $G\times\Z$ has an Eilenberg--Mac Lane space $K(G\times\Z,1)$ with
  finitely many $n$-cells for all sufficiently large $n$.
\end{theorem}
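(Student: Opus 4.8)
The plan is to reduce the statement to a purely algebraic assertion about free resolutions over $\Z[G\times\Z]$, and then to realise such a resolution geometrically using Proposition \ref{Kropholler prop}. The algebraic goal is to show that the trivial module $\Z$ admits a free resolution over $\Z[G\times\Z]$ that is finitely generated in all sufficiently high degrees; once this is secured, the topological step is essentially formal.

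First I would extract the module-theoretic content of the hypotheses. By Lemma \ref{exists cofib cfin module} there is an integer $N\geq0$ such that, in a fixed free resolution $P_*\epi\Z$ of $\Z$ over $\Z G$, the $N$th kernel $K$ is completely finitary and cofibrant. Proposition \ref{M divides module with even fg proj res prop} then shows that $K$ is a direct summand of a $\Z G$-module $L$ admitting a projective resolution that is eventually finitely generated; write $L\cong K\oplus K'$ and let $e=ip\colon L\to L$ be the idempotent with image $K$. The heart of the matter is to remove this direct summand, and this is exactly where passing from $G$ to $G\times\Z$ is essential: a direct summand of a module with an eventually finitely generated resolution need not itself have one (a finiteness-obstruction phenomenon), and merely inducing up to $\Z[G\times\Z]$ does not help, since the summand relation persists. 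The obstruction is, however, killed on crossing with $\Z$. Concretely, I would work over the Laurent extension $\Z[G\times\Z]\cong\Z G[t,t^{-1}]$ and form the algebraic mapping torus of $e$, exploiting the exact sequence $0\to\widehat L\xrightarrow{\,t-1\,}\widehat L\to L\to0$ (where $\widehat L$ denotes $L$ induced up to $\Z[G\times\Z]$) together with the invertibility of $t$. This is the algebraic incarnation of Mather's theorem that a finitely dominated space becomes a finite complex after multiplication by $S^1$: the mapping torus of $e$ is assembled from two copies of $\widehat L$, hence inherits an eventually finitely generated projective resolution, while at the same time it is chain equivalent to the mapping torus of the identity on $K$ (via $pi=\id_K$, $ip=e$), which computes the relevant syzygy of $\Z$ over $\Z[G\times\Z]$. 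I expect this to be the main obstacle, since turning the mapping-torus argument into a precise statement about resolutions — and verifying that it genuinely yields finite generation rather than mere domination — is the delicate point on which the whole theorem, and its restriction to $G\times\Z$, turns.

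Granting this, I would splice the bounded bottom of the induced resolution (the image over $\Z[G\times\Z]$ of $P_*$ through degree $N$) with the eventually finitely generated resolution produced above, applying Proposition \ref{final 2 mods even fg prop} to keep everything finitely generated in high degrees. This yields an eventually finitely generated projective resolution of $\Z$ over $\Z[G\times\Z]$, which Proposition \ref{proj res eventualy fg implies free res eventually fg prop} upgrades to an eventually finitely generated \emph{free} resolution $F_*\epi\Z$.

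Finally I would realise $F_*$ topologically. Starting from a low-dimensional free $(G\times\Z)$-CW model for the bottom of $F_*$ and applying Proposition \ref{Kropholler prop} repeatedly — at each stage attaching free orbits of cells according to the free module $F_{n+1}$ and the surjection onto the top homology of the current skeleton — I build an increasingly highly connected free $(G\times\Z)$-CW-complex whose colimit $E$ is contractible and whose cellular chain complex is $F_*$. Then $E/(G\times\Z)$ is a $K(G\times\Z,1)$, and since $F_n$ is finitely generated for all large $n$ it has finitely many $n$-cells for all sufficiently large $n$, as required.
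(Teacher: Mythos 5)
Your proposal is correct and follows essentially the same route as the paper: what you package as the algebraic mapping torus of the idempotent $e$ is exactly what the paper constructs when it tensors the $\Z G$-resolution ending in $M$ (your $K$) with the chain complex of $\R$ and then replaces $M$ by $L$ — the first map of the paper's resulting sequence is $1-te$ in its first coordinate (up to sign), so its cokernel is an extension of your $T(e)\cong T(\id_K)$ by a free module — and the remaining steps (Lemma \ref{exists cofib cfin module}, Proposition \ref{M divides module with even fg proj res prop}, the splicing via Propositions \ref{final 2 mods even fg prop} and \ref{proj res eventualy fg implies free res eventually fg prop}, and the realization via Proposition \ref{Kropholler prop} starting from the $2$-skeleton of $\widetilde{Y}\times\R$) are used exactly as you propose. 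One slip to repair in a careful write-up: the bottom complex you splice onto cannot be ``the image over $\Z[G\times\Z]$ of $P_*$'' literally, since the induced complex $\widehat{P}_*$ resolves $\Z[t,t^{-1}]$ rather than $\Z$; it must be the tensor/mapping-torus complex with degree-$k$ term $\widehat{P}_k\oplus\widehat{P}_{k-1}$ (whence the relevant syzygy is not $K^{\mathrm{triv}}$ itself but an extension of it by the free module $\widehat{P}_{N-1}$, which the Horseshoe Lemma absorbs), and this is what your own framework produces when the mapping torus is taken of the whole resolution rather than of $K$ alone.
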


\begin{proof}
  Let $Y$ be the $2$-complex associated to some presentation of
  $G$, and let $\widetilde{Y}$ denote its universal cover. The augmented cellular chain complex of $\widetilde{Y}$ is a
  partial free resolution of the trivial $\Z G$-module, which we
  denote by $$F_2\ra F_1\ra F_0\ra\Z\ra 0.$$ We can extend this to
  a free resolution $F_*\epi\Z$ of the trivial $\Z G$-module, and as $G$ is an $\lhf$-group with cohomology almost
  everywhere finitary, it follows from Lemma \ref{exists cofib cfin
  module} that there is some $m\geq 0$ such that the $m$th kernel
  $$M:=\ker(F_{m-1}\ra F_{m-2})$$ of this resolution is a completely finitary,
  cofibrant $\Z G$-module. We then have the following exact sequence of $\Z G$-modules: $$0\ra M\ra F_{m-1}\ra\cdots\ra F_0\ra\Z\ra 0.$$

  Next, recall that the
  circle $S^1$ is an Eilenberg--Mac Lane space $K(\Z,1)$,
  with universal cover $\R$. The augmented cellular chain
  complex of $\R$ is the following free
  resolution of the trivial $\Z \Z$-module: $$0\ra\Z
  \Z\ra\Z \Z\ra\Z\ra 0.$$

  If we tensor these two exact sequences together, we obtain the following
  exact
  sequence of $\Z[G\times\Z]$-modules: $$\begin{array}{lrrrr}
     0\ra
  M\otimes\Z \Z\ra M\otimes\Z \Z\oplus
  F_{m-1}\otimes\Z \Z\ra & \blah & \blah & \blah & \blah \end{array}$$ $$\begin{array}{llr} \blah & \blah & F_{m-1}\otimes\Z \Z\oplus
  F_{m-2}\otimes\Z \Z \ra \cdots\ra F_0\otimes\Z
  \Z\ra\Z\ra 0.
  \end{array}$$

  Now, as $M$ is a completely finitary, cofibrant $\Z G$-module,
  it follows from Proposition \ref{M divides module with even fg proj res
  prop} that $M$ is isomorphic to a direct summand of some $\Z G$-module $L$ which has a
  projective resolution that is eventually finitely generated. We then obtain the following exact sequence of $\Z
  [G\times \Z]$-modules: $$\begin{array}{lrrrr}
     0\ra
  L\otimes\Z \Z\ra L\otimes\Z \Z\oplus
  F_{m-1}\otimes\Z \Z\ra & \blah & \blah & \blah & \blah \end{array}$$ $$\begin{array}{llr} \blah & \blah & F_{m-1}\otimes\Z \Z\oplus
  F_{m-2}\otimes\Z \Z \ra \cdots\ra F_0\otimes\Z
  \Z\ra\Z\ra 0.
  \end{array}$$

  It now follows from Propositions \ref{final 2 mods even fg prop}
  and \ref{proj res eventualy fg implies free res eventually fg
  prop} that we can extend the partial free resolution
  $$F_{m-1}\otimes\Z \Z\oplus F_{m-2}\otimes\Z
  \Z\ra\cdots\ra F_0\otimes\Z\Z\ra \Z\ra 0$$ of the trivial $\Z [G\times\Z]$-module to a
  free resolution that is eventually finitely generated. We shall denote
  this free resolution by $F'_*\epi\Z$.

  Next, let $X^2$ denote the subcomplex of $\widetilde{Y}\times \R$,
  consisting of the $0$, $1$ and $2$-cells. Then, as
  $$C_*(\widetilde{Y}\times\R)\cong C_*(\widetilde{Y})\otimes C_*(\R),$$ we see that the augmented cellular
  chain complex of $X^2$ is the following: $$F_2'\ra F_1'\ra F_0'\ra\Z\ra
  0,$$ and, furthermore,
  that $\widetilde{H}_i(X^2)=0$ for $i=0,1$. We therefore have the
  following exact sequence: $$0\ra \widetilde{H}_2(X^2)\ra
  F'_2\ra F'_1\ra F'_0\ra\Z \ra 0,$$ and as $F'_3\epi
  \widetilde{H}_2(X^2)$, it follows from Proposition \ref{Kropholler prop} that we can embed $X^2$ into a
  $2$-connected $3$-complex $X^3$ such that we have the following short exact
  sequence: $$0\ra \widetilde{H}_3(X^3)\ra F_3'\ra \widetilde{H}_2(X^2)\ra 0.$$
  Then $F_4'\epi\widetilde{H}_3(X^3)$, and we can continue as
  before.

  By induction, we can then construct a space, which we denote by $X$, such that
  $C_n(X)=F_n'$ for all $n$. Then, as the free resolution
  $F_*'\epi\Z$ is eventually finitely generated, it follows that
  $C_n(X)$ is finitely generated for all sufficiently large $n$. Also, we see that $\widetilde{H}_i(X)=0$ for all $i$, and so $X$ is
  contractible (see \S I.$4$ in \cite{Brown}).

  We see from Proposition $1.40$ in \cite{Hatcher} that $X$ is the
  universal cover for the quotient space
  $\overline{X}:=X/G\times \Z$, and furthermore that
  $\overline{X}$ has fundamental group isomorphic to $G\times
  \Z$. Thus, $\overline{X}$ is an
  Eilenberg--Mac Lane space $K(G\times \Z,1)$, and as $C_n(X)$ is finitely generated for all
  sufficiently large $n$, we conclude that $\overline{X}$ has finitely many $n$-cells for all
  sufficiently large $n$, as required.\end{proof}

\subsection{Proof of Theorem A (ii) $\Rightarrow$ (iii)}$ $

We do not require the assumption that $G$ belongs to $\lhf$ for
this section.

Recall from page $528$ of \cite{Hatcher} that a space $Y$ is said
to be \emph{dominated} by a space $K$ if and only if $Y$ is a
retract of $K$ in the homotopy category; that is, there are maps
$i:Y\ra K$ and $r:K\ra Y$ such that $ri\simeq \id_Y$.

\begin{prop}
  Suppose that $K$ is a $K(G\times\Z,1)$ space with finitely many
  $n$-cells for all sufficiently large $n$. Then $G$ has an
  Eilenberg--Mac Lane space $K(G,1)$ which is dominated by $K$.
\end{prop}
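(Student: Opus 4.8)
The plan is to exploit the fact that $G$ is an algebraic retract of $G\times\Z$ and to transport this retraction to the level of Eilenberg--Mac Lane spaces. Let $\iota\colon G\ra G\times\Z$ be the inclusion $g\mapsto(g,0)$ and let $\pi\colon G\times\Z\ra G$ be the projection $(g,n)\mapsto g$; these are group homomorphisms with $\pi\circ\iota=\id_G$. The whole proof is a matter of realizing $\iota$ and $\pi$ by maps of spaces and observing that the composite inducing $\id_G$ must be homotopic to the identity.

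First I would fix any CW-complex $X$ which is a $K(G,1)$ (for instance the one arising from a presentation of $G$, or a bar-construction model); this is the space which the proposition asserts to exist. Since both $X$ and $K$ are aspherical CW-complexes, the standard result on maps into Eilenberg--Mac Lane spaces (Proposition $1B.9$ in \cite{Hatcher}) gives, for any two such spaces, a bijection between based homotopy classes of based maps and homomorphisms of the corresponding fundamental groups. Using this, I realize $\iota$ by a based map $i\colon X\ra K$ with $i_*=\iota$, and I realize $\pi$ by a based map $r\colon K\ra X$ with $r_*=\pi$, applying the same proposition once with target $K$ and once with target $X$.

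Next I would compute the effect of the composite $r\circ i\colon X\ra X$ on fundamental groups: by functoriality $(r\circ i)_*=\pi\circ\iota=\id_G$. Thus $r\circ i$ and $\id_X$ are two based self-maps of the aspherical complex $X$ inducing the same homomorphism on $\pi_1$, so by the uniqueness clause of Proposition $1B.9$ they are homotopic, $r\circ i\simeq\id_X$. This is precisely the assertion that $X$ is a retract of $K$ in the homotopy category, so the $K(G,1)$ space $X$ is dominated by $K$, as required.

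The argument is essentially formal once the right tool is named, so I do not expect a genuine obstacle. The single point that needs care is the \emph{direction} of the domination: one must use $\iota$ (not $\pi$) to define the map $i\colon X\ra K$ and $\pi$ (not $\iota$) to define the retraction $r\colon K\ra X$, so that the composite $r\circ i$ recovers $\id_G$ rather than the idempotent endomorphism $\iota\circ\pi$ of $G\times\Z$; with the maps composed the other way one would only dominate $K$ by $X$, which is not what is wanted. I would also note that the finiteness of the cell structure of $K$ plays no role in constructing $X$ or in establishing the domination, and is relevant only when deducing statement (iii) of Theorem A, where domination by $K$ transfers the ``finitely many $n$-cells for large $n$'' property to $X$.
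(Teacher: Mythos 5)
Your proof is correct, but it takes a mildly different route from the paper's. The paper picks a $K(G,1)$ space $Y$, observes that $Y\times S^1$ is a $K(G\times\Z,1)$ space (Example 1B.5 in \cite{Hatcher}), and invokes the uniqueness of Eilenberg--Mac Lane CW-complexes up to homotopy equivalence (Theorem 1B.8 there) to conclude $Y\times S^1\simeq K$; since $Y$ is visibly a retract of $Y\times S^1$, it is dominated by $K$. You bypass the explicit model $Y\times S^1$ and instead realize the group-theoretic retraction $\pi\circ\iota=\id_G$ directly by maps $X\ra K\ra X$, using the realization-and-uniqueness statement for maps into aspherical CW-complexes (Proposition 1B.9 in \cite{Hatcher}, of which Theorem 1B.8 is a corollary). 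So the two arguments rest on the same underlying theory of aspherical spaces, just packaged differently. What your version buys: it makes transparent that the only property of $G\times\Z$ used is that $G$ is a retract of it, so the same proof shows that whenever $G$ is a retract of a group $H$ admitting a CW model of $K(H,1)$ with the stated cell finiteness, then $G$ has a $K(G,1)$ dominated by that model. What the paper's version buys: it is a two-line deduction from standard cited theorems, with the retraction left implicit in the product structure. One point both arguments need, and which you use tacitly when applying Proposition 1B.9 with $K$ as the source of the map $r$: $K$ must itself be a CW-complex, which is implicit in the hypothesis that it has finitely many $n$-cells for large $n$. Your closing remarks --- on getting the direction of the domination right, and on the finiteness hypothesis being irrelevant to this implication --- are both accurate.
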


\begin{proof}
  As every group has an Eilenberg--Mac Lane space (Theorem $7.1$ \S VIII in
  \cite{Brown}), we can choose a $K(G,1)$ space $Y$.
  Then, as $S^1$ is a $K(\Z,1)$ space, we see from Example $1$B.$5$ in
  \cite{Hatcher} that $Y\times S^1$ is a $K(G\times\Z,1)$ space. Then,
  as $K(G\times\Z,1)$ spaces are unique up to homotopy equivalence
  (Theorem $1$B.$8$ in \cite{Hatcher}), we see that $Y\times
  S^1\simeq K$, and hence that $Y$ is dominated by $K$.\end{proof}

\subsection{Proof of Theorem A (iii) $\Rightarrow$ (i)}$ $

Once again, we do not require the assumption that $G$ belongs to
$\lhf$ for this section.

\begin{lemma}\label{replace with one with same fun gp lemma}
  Let $Y$ be a $K(G,1)$ space which is
  dominated by a CW-complex with finitely many cells in all sufficiently high
  dimensions. Then we may choose this complex to have fundamental group
  isomorphic to $G$.
\end{lemma}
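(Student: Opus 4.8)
The plan is to leave the dominating complex $K$ essentially untouched and merely attach $2$-cells to it in order to cut its fundamental group down to $G$. Write $i\colon Y\ra K$ and $r\colon K\ra Y$ for the maps exhibiting the domination, so that $ri\simeq\id_Y$. After discarding the path components of $K$ that do not meet $i(Y)$ we may assume $K$ is connected (each component is a subcomplex, so the finiteness hypothesis survives), and after a cellular approximation and a choice of compatible basepoints we may take $i$ and $r$ to be based cellular maps. Passing to fundamental groups, and using $\pi_1(Y)=G$ since $Y$ is a $K(G,1)$, we obtain homomorphisms $i_*\colon G\ra\pi_1(K)$ and $r_*\colon\pi_1(K)\ra G$ with $r_*i_*=\id_G$. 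Thus $r_*$ is a split surjection; set $N:=\ker r_*$, a normal subgroup with $\pi_1(K)/N\cong G$.

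Next I would attach $2$-cells to $K$ along based loops representing a generating set of $N$, obtaining a complex $K'$. Since $N$ is normal its normal closure equals $N$, so $\pi_1(K')\cong\pi_1(K)/N\cong G$, the isomorphism being induced by $r_*$. Because only $2$-cells have been adjoined, the number of $n$-cells of $K'$ equals that of $K$ for every $n\ge 3$; hence $K'$ still has finitely many $n$-cells for all sufficiently large $n$.

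It then remains to check that $K'$ still dominates $Y$. The inclusion $K\hookrightarrow K'$ composed with $i$ gives a map $i'\colon Y\ra K'$. To extend $r$ over the new cells, observe that each attaching loop $\gamma$ represents an element of $N=\ker r_*$, so $r\circ\gamma\colon S^1\ra Y$ is trivial in $\pi_1(Y)$, hence null-homotopic, and therefore extends over the disc $D^2$. Extending cell by cell produces $r'\colon K'\ra Y$ with $r'|_K=r$, whence $r'i'=ri\simeq\id_Y$. Thus $Y$ is dominated by $K'$, and $K'$ has fundamental group isomorphic to $G$, as required.

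The step that genuinely requires care is choosing what to attach so that all three requirements hold simultaneously. The tempting alternative of replacing $K$ by the covering space corresponding to the subgroup $i_*(G)\le\pi_1(K)$ does yield fundamental group $G$, but when this subgroup has infinite index the cover has infinitely many cells in every dimension and the finiteness hypothesis is destroyed. Attaching $2$-cells avoids this, leaving all cells of dimension $\ge 3$ alone; and the fact that we attach \emph{only} along loops in $\ker r_*$ is precisely what lets $r$ extend, so that preserving the domination, fixing the fundamental group, and keeping the finiteness are all secured at once. Everything else is routine bookkeeping with cellular maps and basepoints.
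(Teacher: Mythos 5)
Your proof is correct and is essentially the paper's own argument: the paper likewise kills $N=\ker(\pi_1(r))$ by gluing the cone on a bouquet of circles (one circle per generator of $N$) onto $K$ --- which is exactly attaching $2$-cells --- extends $r$ over the new cells using that the attaching loops become null-homotopic in $Y$, and computes the fundamental group via van Kampen's Theorem. The only differences are cosmetic: the paper phrases the construction as a pushout with the cone $CW$ rather than cell-by-cell attachment, and your remarks about discarding components, basepoints, and the failed covering-space alternative are extra bookkeeping and commentary.
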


\begin{proof}
  Let $Y$ be dominated by a CW-complex $K$ that has finitely many
  cells in all sufficiently high dimensions, so there are maps $$Y\stackrel{i}{\ra}
  K\stackrel{r}{\ra} Y$$ such that $ri\simeq\id_Y$. Applying $\pi_1$ gives maps
  $$\pi_1(Y)\stackrel{\pi_1(i)}{\ra}\pi_1(K)\stackrel{\pi_1(r)}{\ra}
  \pi_1(Y)$$ such that $\pi_1(r)\pi_1(i)=\id_{\pi_1(Y)}$. Hence,
  $\pi_1(r)$ is surjective. Let $K'$ denote the kernel of
  $\pi_1(r)$, and let $W$ be a bouquet of circles, with one circle for
  each generator in some chosen presentation of $K'$, so there is an
  obvious map $W\ra K$.

  Next, let $CW$ denote the cone on $W$, and form
  the following pushout: $$\xymatrix{W\ar[r]\ar[d] & K\ar@{-->}[d]\\
  CW\ar@{-->}[r] & L}$$ It follows that $L$ is a CW-complex with
  finitely many cells in all sufficiently high dimensions.

  Now, the composite map $W\ra
  K\stackrel{r}{\ra} Y$ is clearly nullhomotopic, and therefore
  lifts through the cone, so we have the following diagram: $$\xymatrix{W\ar[r]\ar[d] & K\ar[d]\ar@/^/[rdd]^r & \blah\\
  CW\ar[r]\ar@/_/[rrd]  & L & \blah\\ \blah & \blah & Y}$$ and so
  by the definition of pushout, there is an induced map $L\ra Y$
  making the above diagram commute. If we now compose this with the
  map $Y\stackrel{i}{\ra} K\ra L$, we obtain a map $Y\ra
  L\ra Y$ that is homotopic to the identity on $Y$. Hence, $Y$ is
  dominated by $L$.

  Finally, by
  van Kampen's Theorem (Theorem $1.20$ in
  \cite{Hatcher}), we see that $$\begin{array}{lcl}
    \pi_1(L) & \cong & \pi_1(K)/\im(\pi_1(W)\ra\pi_1(K)) \\
    \blah & \cong & \pi_1(Y)\\ \blah &\cong & G,
  \end{array}$$ as required.\end{proof}

Next, recall from \cite{BrownPaper} that if $P:=(P_i)_{i\geq 0}$
is a chain complex of projective $\Z G$-modules, then we define
the \emph{cohomology theory} $H^*(P,-)$ \emph{determined by $P$}
as
$$H^n(P,M):=H^n(\Hom_{\Z G}(P_*,M))$$ for every
$\Z G$-module $M$ and every $n\in\N$.

\begin{lemma}\label{even fg proj lemma}
  Let $P:=(P_i)_{i\geq 0}$ be a chain complex of projective
  $\Z G$-modules. If $P_{n-1}, P_n$ and $P_{n+1}$ are finitely
  generated, then $H^n(P,-)$ is finitary.
\end{lemma}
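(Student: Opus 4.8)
The plan is to compute $H^n(P,M)$ directly as a subquotient and to exploit the fact that only three terms of the complex are involved. By definition, $H^n(P,M)$ is the cohomology at degree $n$ of the cochain complex $\Hom_{\Z G}(P_*,M)$, so it depends only on the three-term sequence
$$\Hom_{\Z G}(P_{n-1},M)\ra\Hom_{\Z G}(P_n,M)\ra\Hom_{\Z G}(P_{n+1},M),$$
being the kernel of the second map modulo the image of the first.

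First I would record the standard fact that if $Q$ is a finitely generated projective $\Z G$-module, then the functor $\Hom_{\Z G}(Q,-)$ is finitary. For a finitely generated free module $(\Z G)^k$ this is clear, since $\Hom_{\Z G}((\Z G)^k,-)\cong(-)^k$ and filtered colimits commute with finite direct sums; the projective case follows because a finitely generated projective $Q$ is a direct summand of such a free module, and the colimit comparison map for $Q$ is then a direct summand of the corresponding bijective comparison map for the free module, hence is itself bijective. In particular, since $P_{n-1},P_n$ and $P_{n+1}$ are finitely generated, each of $\Hom_{\Z G}(P_{n-1},-)$, $\Hom_{\Z G}(P_n,-)$ and $\Hom_{\Z G}(P_{n+1},-)$ is finitary.

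Now let $(M_\lambda)$ be a filtered system of $\Z G$-modules with colimit $M:=\colim_\lambda M_\lambda$. The key point is that filtered colimits are exact in the category of abelian groups, so they commute with the formation of kernels and images, and hence with the cohomology of cochain complexes. Applying this to the filtered system of cochain complexes $\Hom_{\Z G}(P_*,M_\lambda)$ gives a natural isomorphism
$$\colim_\lambda H^n(P,M_\lambda)\cong H^n\big(\colim_\lambda\Hom_{\Z G}(P_*,M_\lambda)\big).$$
The right-hand side depends only on the terms in degrees $n-1,n,n+1$, and at each such degree $i$ the finitariness established above yields $\colim_\lambda\Hom_{\Z G}(P_i,M_\lambda)\cong\Hom_{\Z G}(P_i,M)$. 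Since these isomorphisms are natural, they intertwine the differentials of the colimit complex with those of $\Hom_{\Z G}(P_*,M)$ in the relevant range, so identifying the three terms identifies the degree-$n$ cohomology, giving $H^n\big(\colim_\lambda\Hom_{\Z G}(P_*,M_\lambda)\big)\cong H^n(\Hom_{\Z G}(P_*,M))=H^n(P,M)$. Combining the two displayed isomorphisms shows $\colim_\lambda H^n(P,M_\lambda)\cong H^n(P,\colim_\lambda M_\lambda)$, which is precisely the assertion that $H^n(P,-)$ is finitary.

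I expect the only subtle point to be the bookkeeping in the final step, namely checking that the natural isomorphisms $\colim_\lambda\Hom_{\Z G}(P_i,M_\lambda)\cong\Hom_{\Z G}(P_i,M)$ for $i=n-1,n,n+1$ are compatible with the differentials so that the induced maps on cohomology agree. This is a routine naturality verification rather than a genuine obstacle; the essential content lies entirely in the two facts that finitely generated projectives give finitary $\Hom$-functors and that filtered colimits of abelian groups are exact.
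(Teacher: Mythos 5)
Your proof is correct, but it takes a different route from the paper's. The paper avoids invoking the general principle that filtered colimits commute with the cohomology of cochain complexes: it sets $M:=\coker(P_{n+1}\ra P_n)$ and notes that, by left exactness of $\Hom$, the functor $\Hom_{\Z G}(M,-)$ is precisely the $n$-cocycle functor, sitting in an exact sequence of functors $0\ra\Hom_{\Z G}(M,-)\ra\Hom_{\Z G}(P_n,-)\ra\Hom_{\Z G}(P_{n+1},-)$. A Five Lemma argument, comparing the colimit of these exact sequences with the sequence evaluated at the colimit, shows that $\Hom_{\Z G}(M,-)$ is finitary; a second application of the Five Lemma to the exact sequence of functors $\Hom_{\Z G}(P_{n-1},-)\ra\Hom_{\Z G}(M,-)\ra H^n(P,-)\ra 0$ then finishes the proof. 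Both arguments rest on the same two ingredients, namely that finitely generated projectives give finitary $\Hom$ functors and that filtered colimits of abelian groups are exact; but the paper channels the exactness through Five Lemma comparisons of exact sequences of functors, which keeps every map visibly the natural comparison map and so dispenses with the final naturality bookkeeping that you (rightly) flag as the delicate point of your argument, whereas your route trades the auxiliary cokernel module for the wholesale commutation of filtered colimits with cohomology together with the three-term locality of $H^n$. A minor further difference: you prove the preliminary fact about finitely generated projectives directly (as a summand of a finitely generated free module), while the paper simply cites Lemma 4.7 of Chapter VIII in Brown's book.
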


\begin{proof}
  Firstly, recall from Lemma $4.7$ \S VIII in
  \cite{Brown} that if $Q$ is a finitely generated projective
  module, then the functor $\Hom_{\Z G}(Q,-)$ is finitary.

  Next, let $M:=\coker(P_{n+1}\ra P_n)$, so we have the following
  exact sequence: $$P_{n+1}\ra P_n\ra M\ra 0,$$ which
  gives the following exact sequence of functors:
  $$0\ra\Hom_{\Z G}(M,-)\ra\Hom_{\Z G}(P_n,-)\ra\Hom_{\Z
  G}(P_{n+1},-).$$ Let $(N_{\lambda})$ be any filtered colimit
  system of $\Z G$-modules, so we have the following commutative
  diagram with exact rows: $$\xymatrix{0\ar[r]\ar[d] & \colim_{\lambda}\Hom_{\Z
  G}(M,N_{\lambda})\ar[r]\ar[d] & \colim_{\lambda}\Hom_{\Z
  G}(P_n,N_{\lambda})\ar[r]\ar[d] & \colim_{\lambda}\Hom_{\Z
  G}(P_{n+1},N_{\lambda})\ar[d]\\0\ar[r] & \Hom_{\Z
  G}(M,\colim_{\lambda} N_{\lambda})\ar[r] & \Hom_{\Z
  G}(P_n,\colim_{\lambda} N_{\lambda})\ar[r] & \Hom_{\Z
  G}(P_{n+1},\colim_{\lambda} N_{\lambda})}$$ and as both
  $\Hom_{\Z G}(P_n,-)$ and $\Hom_{\Z G}(P_{n+1},-)$ are finitary,
  the two right-hand maps are isomorphisms. It then follows from
  the Five Lemma that the natural map $$\colim_{\lambda} \Hom_{\Z
  G}(M,N_{\lambda})\ra\Hom_{\Z G}(M,\colim_{\lambda}
  N_{\lambda})$$ is an isomorphism, and hence that $\Hom_{\Z
  G}(M,-)$ is finitary.

  Then, as we have the following exact sequence of functors:
  $$\Hom_{\Z G}(P_{n-1},-)\ra\Hom_{\Z G}(M,-)\ra H^n(P,-)\ra 0,$$ the
  result now follows from another application of the Five Lemma.\end{proof}

We can now prove the implication (iii) $\Rightarrow$ (i) of
Theorem A:

\begin{prop}
  Suppose that $G$ has an Eilenberg--Mac Lane space $K(G,1)$ which is dominated by a CW-complex with finitely many $n$-cells
  for all sufficiently large $n$. Then $G$ has cohomology almost
  everywhere finitary.
\end{prop}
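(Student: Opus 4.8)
The plan is to exhibit ordinary group cohomology $H^*(G,-)$ as a natural direct summand of the cohomology theory determined by the cellular chain complex of the universal cover of the dominating complex, and then to read off finitariness in high degrees from Lemma \ref{even fg proj lemma}.

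First I would apply Lemma \ref{replace with one with same fun gp lemma} to replace the dominating CW-complex by one, which I continue to call $K$, whose fundamental group is isomorphic to $G$ and which still dominates $Y$; thus there are maps $Y\stackrel{i}{\ra} K\stackrel{r}{\ra} Y$ with $ri\simeq\id_Y$, and the retraction induces an isomorphism $\pi_1(K)\iso G$. Passing to universal covers, $\widetilde{Y}$ is contractible (since $Y$ is a $K(G,1)$), while $\widetilde{K}$ carries a free $G$-action with $C_n(\widetilde{K})$ finitely generated for all sufficiently large $n$, because $K$ has finitely many $n$-cells in high dimensions. Lifting $i$ and $r$ to $G$-equivariant maps, and lifting the homotopy $ri\simeq\id_Y$ to a $G$-equivariant homotopy, I obtain on cellular chains a pair of $\Z G$-chain maps $$C_*(\widetilde{Y})\stackrel{\alpha}{\ra} C_*(\widetilde{K})\stackrel{\beta}{\ra} C_*(\widetilde{Y})$$ with $\beta\alpha$ chain homotopic to the identity. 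Since $\widetilde{Y}$ is contractible, $C_*(\widetilde{Y})\epi\Z$ is a free resolution of the trivial module, so the cohomology theory it determines is exactly $H^*(G,-)$.

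Next I would apply $\Hom_{\Z G}(-,M)$ and take cohomology. The chain-level retraction yields, for every $n$ and every $\Z G$-module $M$, natural maps $$H^n(G,M)\ra H^n(C_*(\widetilde{K}),M)\ra H^n(G,M)$$ whose composite is the identity; that is, $H^n(G,-)$ is a natural retract of $H^n(C_*(\widetilde{K}),-)$. By Lemma \ref{even fg proj lemma}, the functor $H^n(C_*(\widetilde{K}),-)$ is finitary for every $n$ such that $C_{n-1}(\widetilde{K})$, $C_n(\widetilde{K})$ and $C_{n+1}(\widetilde{K})$ are finitely generated, hence for all sufficiently large $n$. It remains to observe that a natural retract of a finitary functor is finitary: given a filtered system $(N_{\lambda})$, the comparison map $$\colim_{\lambda} H^n(G,N_{\lambda})\ra H^n(G,\colim_{\lambda} N_{\lambda})$$ is, via the retraction data, a retract in the arrow category of the corresponding comparison map for $H^n(C_*(\widetilde{K}),-)$, which is an isomorphism; and a retract of an isomorphism is an isomorphism. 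Therefore $H^n(G,-)$ is finitary for all sufficiently large $n$, which is precisely the assertion that $G$ has cohomology almost everywhere finitary.

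The one delicate point, and the step I expect to need the most care, is the passage from the topological domination to an equivariant chain-level retraction. This is exactly why Lemma \ref{replace with one with same fun gp lemma} is invoked first: arranging $\pi_1(K)\iso G$ with the retraction $r$ inducing this isomorphism guarantees that $i$, $r$ and the homotopy $ri\simeq\id_Y$ all lift $G$-equivariantly to the universal covers, so that $\alpha$, $\beta$ and the chain homotopy $\beta\alpha\simeq\id$ are genuinely maps of $\Z G$-complexes. Everything after that is formal homological algebra.
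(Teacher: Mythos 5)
Your proposal is correct and follows essentially the same route as the paper: replace the dominating complex via Lemma \ref{replace with one with same fun gp lemma}, realise $H^*(G,-)$ as a natural retract of the cohomology theory determined by $C_*(\widetilde{K})$, and apply Lemma \ref{even fg proj lemma} in high degrees. Your closing observation that a retract of an isomorphism is an isomorphism is just a cleaner phrasing of the diagram chase the paper attributes to the Five Lemma, and your care about the equivariant chain-level lifting makes explicit what the paper leaves implicit.
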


\begin{proof}
  This is a generalization of the proof of Proposition $6.4$ \S VIII
  in \cite{Brown}:

  Let $Y$ be such a $K(G,1)$ space. By Lemma \ref{replace with one with same fun gp lemma}, we see that
  $Y$ is dominated by a CW-complex $K$ with finitely many cells in
  all sufficiently high dimensions, such that $K$ has fundamental
  group isomorphic to $G$. Let
  $\widetilde{Y}$ and $\widetilde{K}$ denote the respective
  universal covers. We see that
  $C_*(\widetilde{Y})$ is a retract of $C_*(\widetilde{K})$ in the
  homotopy category of chain complexes over $\Z G$.
  Therefore, we obtain maps giving the following commutative diagram: $$\xymatrix{H^*(G,-)\ar@{=}[rd]\ar[r] &
  H^*(C,-)\ar[d]\\ \blah & H^*(G,-)}$$ where $H^*(C,-)$ denotes the cohomology
  theory determined by $C_*(\widetilde{K})$. We then conclude that
  $H^*(G,-)$ is a direct summand of $H^*(C,-)$.

  Now, as $K$ has finitely many cells in all sufficiently high dimensions, it
  follows that $C_*(\widetilde{K})$ is eventually finitely
  generated, and so by Lemma \ref{even fg proj lemma} that $H^k(C,-)$ is
  finitary for all sufficiently large $k$. The result then follows
  from an application of the Five Lemma.\end{proof}

\end{document}